\newtheorem{proposition}{Proposition}[section]
\newtheorem{theorem}[proposition]{Theorem}
\newtheorem{lemma}[proposition]{Lemma}
\newtheorem{definition}[proposition]{Definition}
\newtheorem{remark}[proposition]{Remark}
\newenvironment{proof}{\smallskip\noindent\emph{\textbf{Proof.}}\hspace{1pt}}%
{\hspace{-5pt}{\nobreak\quad\nobreak\hfill\nobreak$\square$\vspace{8pt}%
\par}\smallskip\goodbreak}
\newcommand{\Section}[1]{\section{#1}\setcounter{equation}{0}}
\newcommand{\modulo}[1]{{\left|#1\right|}}
\newcommand{\norma}[1]{{\left\|#1\right\|}}
\newcommand{\reali}{{\mathbb{R}}}
\newcommand{\BV}{\mathbf{BV}}
\renewcommand{\epsilon}{\varepsilon}
\renewcommand{\phi}{\varphi}
\renewcommand{\L}[1]{\mathbf{L^#1}}
\newcommand{\sign}{\mathop{\mathrm{sign}}}
\title{ Modeling and analysis of pooled stepped chutes }
\author{G.~Guerra\footnote{Universit\`a degli Studi di Milano--Bicocca,
    I-20125 Milano, Italy} \and 
  M.~Herty\footnote{RWTH Aachen University, D-52074 Aachen,
    Germany.} \and F.~Marcellini$^*$}
\begin{document}

\maketitle

\begin{abstract}
  We consider an application of pooled stepped chutes where the
  transport in each pooled step is described by the shallow--water
  equations. Such systems can be found for example at large dams in
  order to release overflowing water. We analyze the mathematical
  conditions coupling the flows between different chutes taken from the
  engineering literature.  We present the solution to a Riemann
  problem in the large and also a well--posedness result for the
  coupled problem. We finally report on some numerical experiments.

  \noindent\textit{2000~Mathematics Subject Classification:} 35L65.

  \medskip

  \noindent\textit{Keywords:} Hyperbolic Conservation Laws on
  Networks, Management of Water
\end{abstract}

\Section{Introduction}
\label{sec:Intro}
This work deals with water behavior of so--called pooled stepped
chutes. This geometry frequently appears in the real water dams and it
also appears in mountain rivers to control the bed load transport. In
both cases the main concern is to spill excessive floodwater in
additional channels next to the dam structure. These are called pooled
stepped chutes or pooled steps.  Within the pooled steps additional
weirs perpendicular to the flow direction are introduced to increase
energy dissipation. This problem has been gained some attention in
recent years in the engineering community, see e.g.
\cite{bradley1957,chamani1994,chanson1994,chanson1994b,chanson2002,ohtsu2004,
sorenson1985,IWW1}.  However, only a few mathematical discussions are
currently available \cite{dressler1949,rouse1936}.  In particular, the
modeling and design of the spillways and stepped channels have so far
been addressed using experiments and data fitting techniques, see
e.g. \cite{IWW1}.  From the measurements empirical formulas have been
derived and used in sophisticated simulations. The measurements taking
into account complex geometries as well as material properties and the
air--water mixture leading to a variety of different empirical
formulas and tables \cite{elkamash2005}.
 \par So far, the mathematical discussion has been limited to a
consideration of the effect of the weir at the end of a spillway {\em
neglecting } the dynamics of the water inside the pooled
channels. Here, we discuss the mathematical implications of
considering the coupled problem, i.e., the dynamics inside the pooled
steps and the (empirical or theoretical) conditions imposed closed to
the weir.  Typically, the water flow in the channels is described by
the shallow--water equations whereas the effect of the weir is given
by some algebraic condition. We treat this problem using a network
approach with the conditions at the weir as coupling conditions. We
present a well--posedness result for a simple condition based on
energy dissipation.  The recent literature offers several results on
the modeling of systems governed by conservation laws on networks. For
instance, in~\cite{BandaHertyKlar2, BandaHertyKlar1,
ColomboGaravello3,MR2568708} the modeling of a network of gas
pipelines is considered. The basic model is the $p$-system or,
in~\cite{ColomboMauri,MR2529960}, the full set of Euler equations. The
key problem in these papers is the description of the evolution of
fluid at a junction between two or more pipes.  A different physical
problem, leading to a similar analytical framework, is that of the
flow of water in open channels, considered for example
in~\cite{LeugeringSchmidt}.

Consider a water flow in an open canal affected by a weir or small dam
at the point $x=0$. If the water level becomes greater than the height
of the weir, some water passes over the weir. Similarly to the models
in~\cite{CoronBastinEtAl, Gugat, LeugeringSchmidt} we describe the
dynamics of the water by the shallow water equations, while the
interaction with the weir is described by coupling conditions.

Let $(h,v)(t,x)$ be respectively
the water level (with respect to the flat bottom) and its velocity
for $x\not=0$. The shallow--water equations in each canal 
are given by 
\begin{equation}
  \label{eq:SH}
  \begin{cases}
    \partial_t h + \partial_x (hv) = 0\\
    \partial_t (hv) + \partial_x\left(h{v}^2 +
      \frac 12 g{h}^2\right) = 0
  \end{cases}
  \quad x \not= 0,
\end{equation}
where $g$ is the gravity constant. Two canals are coupled by so--called
pooled steps \cite{IWW1}. A sketch of this situation is given in Figure \ref{fig:diga}. 
\begin{figure}[h]
  \centering
  \includegraphics[width=8cm]{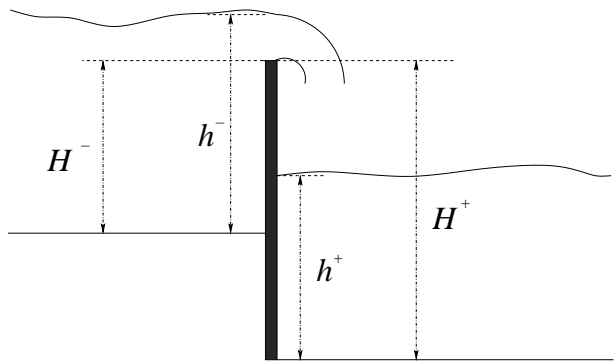}
  \caption{This figure represents two connected pooled steps with a weir in between and
    the indication of the various heights}
  \label{fig:diga}
\end{figure}
In the engineering literature \cite{chanson2002,OpenChHy,IWW1} (see also Remark
\ref{remark1})
 water of height $h^{-}$  flowing over a weir of height $H^{-}$ generates a flow $Q$
  \begin{equation}\label{32law}
 Q = \tilde C \; \sqrt{g} \left( h^{-} - H^{-} \right)^{3/2},
 \end{equation}
where $\tilde C$ is a constant depending on the air--water--ratio and
the detailed geometry. In many situations $\tilde C=0.6$ is used. The
equation (\ref{32law}) is called $3/2-$law. Formally, it can be
derived from the following idea: the potential energy of the water
flowing over the weir is transformed to kinetic energy. Hence, we have
the balance
$$ (\rho h)  \frac{1}2 g h = \frac{1}2 (\rho h) v^{2}.$$
Solving for $v$ in terms of $h$ we obtain the previous formula and
$1-\tilde C$ is the percentage of energy loss during the change of
potential to kinetic energy. We use this idea to deduce the coupling
condition for (\ref{eq:SH}) at $x=0.$ We conserve the total water over
the weir and hence $h^{+}v^{+}=h^{-}v^{-}.$ The difference in the
amount of water overflowing the weir is $\left[h^--H^-\right]_+ -
\left[h^+-H^+\right]_+$. This defines the velocity (and its sign) at
the weir according to the balance of potential and kinetic
energy. Hence, the coupling conditions are
\begin{equation}
  \label{eq:coupl}
  \begin{cases}
      h^-v^- = C\left(\left[h^--H^-\right]_+ -
        \left[h^+-H^+\right]_+\right)
      \cdot
      \sqrt{\left[h^--H^-\right]_+ -
        \left[h^+-H^+\right]_+}
    \\[10pt]
    h^+v^+ = h^-v^-,
  \end{cases}
\end{equation}
where $C=0.6\sqrt{g}$, $(h^-,v^-)=(h,v)(t,0-)$, 
$(h^+,v^+)=(h,v)(t,0+)$ while $H^\pm$ are the heights of the weir
to the left and the right (see Figure \ref{fig:diga}). 

\begin{remark}\label{remark1}
Obviously, (\ref{32law}) is only a first approximation on the complex
dynamics at the weir, see
\cite{chanson1994,rouse1936,IWW1}.
As outlined in the introduction there exists a variety of empirical
formulas in the engineering community. Many of them include further
effects as for example the water--air ratio of the overspill or the
roughness of the channel bottom. For example in \cite[Equation
7.7]{IWW1} the following relation has been determined
\begin{eqnarray*}
  Q = \left( h^{- } - H^{-} \right)^{3/2} \left( \frac{2}3 \mu \sqrt{2 g} \right)
    = \\
 C_{1} \sqrt{g}  \left( h^{- } - H^{-} \right)^{3/2} + C_{2}\sqrt{g}
 \left( h^{- } - H^{-} \right)^{5/2}/ H^-.
\end{eqnarray*}
Here, we have $\mu=0.611 + 0.08 (h^{-}- H^{-}) / H^{-}$ and the constants  are
$C_{1}=\frac{2}{3} \; \sqrt{2}\; 0.611$ and $C_{2}= \frac{2}{3}\;\sqrt{2} \;
0.08$.
Since the coefficient $C_{1}$ is
roughly eight times larger than $C_{2},$ this equation is very similar
to (\ref{32law}).
Another example is given in
\cite{Blaisdell1954},
\cite[Equation 2.50--2.51]{IWW1}, where  the following formula has
been proposed for the flow with $H=h^{-}-H^{-}$
\begin{eqnarray*}
Q = v^{-} H (k_{c}+k_{d})  =  0.15 - 0.45 \frac{Ê(v^{-})^{2}}{Ê2 g H } + \\
\left( 0.57 - 2 \left( \frac{ (v^{-})^{2}}{ 2 g H} - 0.21 \right)^{2}
  \exp\left( 10 \left( \frac{ (v^{-})^{2}}{ 2 g H} - 0.21 \right) \right)\right).
\end{eqnarray*}
\par
The case of no weir, i.e.,  $H^{-} \equiv 0$ and $H^{+} > 0$,  lead to the following
studied formulas,
 e.g. \cite{rouse1936}, \cite[Equation 2.38]{IWW1}
 $$ Q = v^{-} 0.715 h^{-}$$
 or \cite{rand1955}, \cite[Equation 2.39--2.42]{IWW1}
 $$ Q = v^{-}  H^{+}   \left(  h^{-} / H^{+} \right)^{1.275}.$$
 We restrict our discussion to the still commonly used (\ref{32law}) to outline the
 ideas. Further note that 
 additional empirical formulas for the arising wave in the outgoing pooled step are
 not needed in our approach, since these dynamics are fully covered by
 the shallow--water
 equation. 
\end{remark}


\section{The Riemann problem for a single weir}
\label{sec:RP}
By Riemann Problem at the weir we define the problem \eqref{eq:SH}, \eqref{eq:coupl}
with initial data
\begin{equation}
  \label{eq:riemdata}
  (h,v)=
  \begin{cases}
    (h_l,v_l) & \text{ for }x<0\\
    (h_r,v_r) & \text{ for }x>0.\\    
  \end{cases}
\end{equation}
\begin{definition}
  \label{def:rp}
A solution to the Riemann
Problem \eqref{eq:SH}, \eqref{eq:coupl}, \eqref{eq:riemdata}
is a function $(h,v): \reali^+ \times \reali\to \reali^2$ such that
$(t, x) \to (h,v)(t, x)$ is self-similar and coincides in $x>0$
with the restriction of the Lax solution to the standard Riemann Problem
for \eqref{eq:SH} with initial data
\begin{equation}
  \label{eq:riemdata1}
  (h,v)=
  \begin{cases}
    (h,v)(t,0+) & \text{ for }x<0\\
    (h_r,v_r) & \text{ for }x>0.\\    
  \end{cases}
\end{equation}
while coincides in $x<0$
with the restriction to of the Lax solution to the standard Riemann Problem
for \eqref{eq:SH} with initial data
\begin{equation}
  \label{eq:riemdata2}
  (h,v)=
  \begin{cases}
    (h_l,v_l) & \text{ for }x<0\\
    (h,v)(t,0-) & \text{ for }x>0.\\    
  \end{cases}
\end{equation}
moreover $(h,v)(t,0\pm)=(h^\pm,v^\pm)$ satisfy \eqref{eq:coupl}.
\end{definition}

For studying the Riemann problem we first collect the standard
expressions for the eigenvalues,
eigenvectors and Lax curves for the shallow water equations \eqref{eq:SH}.

The $2\times 2$ system of conservation laws in~\eqref{eq:SH} has
  the eigenvalues $\lambda_1,\,\lambda_2$ and the eigenvectors
  $r_1,\,r_2$, where
  \begin{equation}
    \label{eq:eigenvalues2}
    \begin{array}{r@{\;}c@{\;}l@{\qquad}r@{\;}c@{\;}l}
      \lambda_1 (h, v) & = & v-\sqrt{gh}
      &
      \lambda_2 (h, v) & = & v+\sqrt{gh}
      \\[6pt]
      r_1(h, v) & = &
      \left[
        \begin{array}{c}
          -1 \\
          -v + \sqrt{gh}
        \end{array}
      \right]
      &
      r_2(h, v)
      & = &
      \left[
        \begin{array}{c}
          1 \\
          v + \sqrt{gh}
        \end{array}\right]
    \end{array}
  \end{equation}
  \begin{displaymath}
      \nabla_{(h,hv)}\lambda_{1}(h, v) \cdot r_{1}(h, v)
      \frac{3}{2}\sqrt{\frac gh}>0\quad
      \nabla_{(h,hv)}\lambda_{2}(h, v) \cdot r_{2}(h, v)
      \frac{3}{2}\sqrt{\frac gh}>0.
  \end{displaymath}
  The Lax curves of the first and second family, described in
  Figure~\ref{fig:CurveLax}, are:
  \begin{equation}
    \label{Eq:Lax12x2}
    v = \mathcal{L}_1^+(h;h_0, v_0)=
    \begin{cases}
      v_0 - 2\left(\sqrt{gh}-\sqrt{gh_0}\right) & h\leq h_0
      \\[6pt]
      v_0 - (h-h_0) \sqrt{\frac 12 g \frac{h + h_0}{hh_0}} & h>h_0,
    \end{cases}
  \end{equation}
  \begin{equation}
    \label{Eq:Lax32x2}
    v = \mathcal{L}_2^+(h;h_0, v_0)
    =
    \begin{cases}
      v_0 + 2\left(\sqrt{gh}-\sqrt{gh_0}\right) & h\geq
      h_0\\[6pt]
      v_0 + (h-h_0) \sqrt{\frac 12 g \frac{h + h_0}{hh_0}} & h<h_0.
    \end{cases}
  \end{equation}
  The reversed Lax curves of the first and second family
  are given by:
  \begin{equation}
    \label{Eq:Lax12x2r}
    v = \mathcal{L}_1^-(h;h_0, v_0)=
    \begin{cases}
      v_0 - 2\left(\sqrt{gh}-\sqrt{gh_0}\right) & h\geq h_0
      \\[6pt]
      v_0 - (h-h_0) \sqrt{\frac 12 g \frac{h + h_0}{hh_0}} & h<h_0,
    \end{cases}
  \end{equation}
  \begin{equation}
    \label{Eq:Lax32x2r}
    v = \mathcal{L}_2^-(h;h_0, v_0)
    =
    \begin{cases}
      v_0 + 2\left(\sqrt{gh}-\sqrt{gh_0}\right) & h\leq
      h_0\\[6pt]
      v_0 + (h-h_0) \sqrt{\frac 12 g \frac{h + h_0}{hh_0}} & h>h_0.
    \end{cases}
  \end{equation}
  \begin{figure}[h]
    \centering
    \includegraphics[width=8cm]{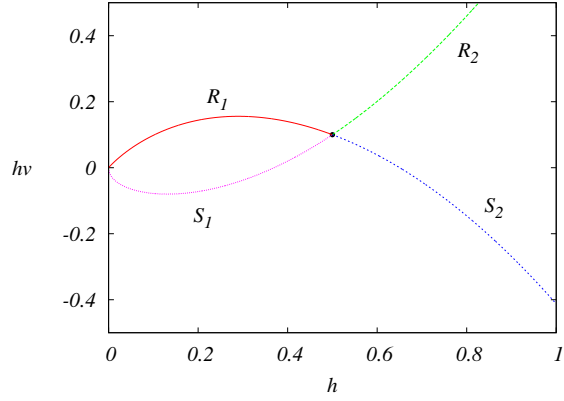}
    \caption{Lax curves for the shallow water system in the $(h,hv)$ plane}
  \label{fig:CurveLax}
\end{figure}
Concerning the coupling condition, we study the states which satisfy it at the
point $x=0$. We restrict ourselves to the subsonic region $|v| < \sqrt{gh}$.
From \eqref{eq:coupl}, since the heights are
always positive, it follows
\begin{displaymath}
  \sign v^- = \sign \left(\left[h^--H^-\right]_+ -
        \left[h^+-H^+\right]_+\right).
\end{displaymath}
Moreover
\begin{displaymath}
   h^-\left|v^-\right| = C\left|\left[h^--H^-\right]_+ -
        \left[h^+-H^+\right]_+\right|^{\frac 32}
\end{displaymath}
which becomes
\begin{displaymath}
  \left(\frac{h^-\left|v^-\right|}{C}\right)^{\frac 23}
  = \left(\left[h^--H^-\right]_+ -
        \left[h^+-H^+\right]_+\right)\cdot\sign v^-,
\end{displaymath}
\begin{displaymath}
  \left[h^+-H^+\right]_+ = \left[h^--H^-\right]_+ -
  \left(\frac{h^-\left|v^-\right|}{C}\right)^{\frac 23}
  \cdot\sign v^-.
\end{displaymath}
We have also to add the equality of the fluxes, hence we obtain
\begin{equation}
  \label{eq:coup2}
  \begin{cases}
  \left[h^+-H^+\right]_+ = \left[h^--H^-\right]_+ -
  \left(\frac{h^-\left|v^-\right|}{C}\right)^{\frac 23}
  \cdot\sign v^-\\
  h^+v^+ = h^-v^-.  
  \end{cases}
\end{equation}
Consider first the case where the water level to the right is below
the weir, $h^+\le H^+$ so that $ \left[h^+-H^+\right]_+ =0$.
If also $h^-<H^-$ then no water crosses the weir, while if
$h^->H^-$ the water
crosses the weir flowing from left to right. The
left state must satisfy $v^-\ge 0$ and
\begin{displaymath}
  \left[h^--H^-\right]_+ -
  \left(\frac{h^-\left|v^-\right|}{C}\right)^{\frac 23}
  = 0,
\end{displaymath}
which can be rewritten as
\begin{equation}
  \label{eq:uppercurve}
  h^-v^-=C\left[h^--H^-\right]_+^{\frac 32}. 
\end{equation}
The curve $(h^-,v^-)$ where $h^-,\,v^-$ satisfy \eqref{eq:uppercurve}
consists of all the left states which can be connected to a
right state with $h^+\le H^+$ (see Figure \ref{fig:CurvaD}). We call
the support of this curve $\Gamma_u$. The case $h^+\ge H^+$, $h^-<H^-$
is symmetric, hence we call $\Gamma_l$ the support of the curve 
\begin{displaymath}
  \label{eq:lowercurve}
  -h^+v^+=C\left[h^+-H^+\right]_+^{\frac 32}.   
\end{displaymath}
Since $C\left[h^--H^-\right]_+^{\frac 32}=
\tilde C\sqrt{g}\left[h^--H^-\right]_+^{\frac 32}\le \sqrt{g}
\left({h^-}\right)^{\frac 32}$ both $\Gamma_u$ and $\Gamma_l$ lye in the
subsonic region. If both levels are above
the weir, then \eqref{eq:coup2} gives a unique state $(h^+,v^+)$
which connects a
given state $(h^-,v^-)$. We call this function $(h^+,h^+v^+)=\Phi (h^-,h^-v^-)$.
We have the following lemma.

\begin{lemma}
  \label{lemma:cons}
  Consider the $(h,hv)$ plane, and define the following sets depicted in Figure \ref{fig:CurvaD}:
  \begin{displaymath}
    \begin{split}
      \Omega_u&=\left\{(h,hv): hv > C\left[h-H^-\right]_+^{\frac{3}{2}}\right\},\\
      \Sigma_l&=\left\{(h,hv): v < 0,\; 0<h \le H^- \right\},\\
      A^-&=\left\{(h,hv): 0< hv < C\left[h-H^-\right]_+^{\frac{3}{2}}\right\},\\
      B^-&=\left\{(h,hv): v \le 0,\; h > H^- \right\},\\
      \Omega_l&=\left\{(h,hv): hv < -C\left[h-H^+\right]_+^{\frac{3}{2}}\right\},\\
      \Sigma_u&=\left\{(h,hv): v > 0,\; 0<h \le H^+ \right\},\\
      A^+&=\left\{(h,hv): v > 0,\; h > H^+ \right\},\\
      B^+&=\left\{(h,hv): -C\left[h-H^+\right]_+^{\frac{3}{2}} < hv \le 0\right\}.
    \end{split}
  \end{displaymath}
  Then, we have that  the coupling condition induces the following assertions:
  \begin{itemize}
  \item 
    no left state $(h^-,h^-v^-)\in \Omega_u$ can be connected to any
    right state $(h^+,h^+v^+)$ and no right state $(h^+,h^+v^+)\in
    \Omega_l$ can be connected to any left state $(h^-,h^-v^-)$;
  \item
    any left state in $\Gamma_u$ can be connected to any right state in
    $\Sigma_u$ with the same flux: $h^-v^-=h^+v^+$;
  \item
    any right state in $\Gamma_l$ can be connected to any left state in
    $\Sigma_l$ with the same flux: $h^-v^-=h^+v^+$;
  \item
    any left state in $A^-$ can be connected to one and only one right state
    in $A^+$ and any right state in $A^+$ can be connected to one and only one
    left state
    in $A^-$;
  \item
    any left state in $B^-$ can be connected to one and only one right state
    in $B^+$ and any right state in $B^+$ can be connected to one and only one left state
    in $B^-$.     
  \end{itemize}
\end{lemma}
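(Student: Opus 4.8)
The plan is to collapse the coupling condition \eqref{eq:coup2} to a relation involving only the conserved flux $q=hv$ and the sign of the velocity. The decisive observation is that $h^-\left|v^-\right|=\left|h^-v^-\right|=|q|$, so the nonlinear term $\left(h^-\left|v^-\right|/C\right)^{2/3}$ in \eqref{eq:coup2} depends only on $|q|$, and $q$ is preserved across the weir by the second equation of \eqref{eq:coup2}. Hence, once $q$ and $\sign v^-$ are fixed, the first equation becomes an \emph{affine} relation between the positive parts,
\[
  [h^+-H^+]_+ = [h^--H^-]_+ - \left(\tfrac{|q|}{C}\right)^{2/3}\sign v^- .
\]
Every assertion of the lemma will follow by reading this identity in one direction or the other and checking a single monotonicity inequality, namely $\left(q/C\right)^{2/3} < [h-H]_+ \iff q < C\,[h-H]_+^{3/2}$.

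First I would dispose of the impossibility claim. If $(h^-,h^-v^-)\in\Omega_u$ then $q>0$, so $v^->0$ and $\sign v^-=+1$; the identity forces $[h^+-H^+]_+=[h^--H^-]_+-(q/C)^{2/3}$, which by the defining inequality $q>C[h^--H^-]_+^{3/2}$ of $\Omega_u$ is strictly negative, impossible for a positive part. The $\Omega_l$ statement is the mirror image under $(H^-,v^-)\leftrightarrow(H^+,-v^+)$. For the boundary assertions, on $\Gamma_u$ one has $q=C[h^--H^-]_+^{3/2}$ exactly, so $(q/C)^{2/3}=[h^--H^-]_+$ and the identity yields $[h^+-H^+]_+=0$, i.e. $h^+\le H^+$; since $q>0$ the flux equation gives $v^+>0$, placing the right state in $\Sigma_u$. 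Conversely every state of $\Sigma_u$ has vanishing positive part and positive flux, so the identity read backwards returns precisely \eqref{eq:uppercurve}; as only the flux must match, each pair consisting of a $\Gamma_u$ state and a $\Sigma_u$ state sharing the value $q$ is admissible. The $\Gamma_l$–$\Sigma_l$ claim is symmetric.

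The core is the pair of bijection statements, where both levels lie above their weirs and the map $\Phi$ is in force. For a left state in $A^-$ we have $v^->0$ and $0<q<C[h^--H^-]_+^{3/2}$; the identity gives $[h^+-H^+]_+=[h^--H^-]_+-(q/C)^{2/3}>0$, which fixes $h^+=H^++[h^+-H^+]_+$ and then $v^+=q/h^+>0$, so the image lies in $A^+$ and is unique by affineness. Running the identity the other way from a state in $A^+$ sets $[h^--H^-]_+=(h^+-H^+)+(q/C)^{2/3}=:R$, and the elementary bound $R^{3/2}>q/C$ (because $R>(q/C)^{2/3}$) confirms $q<C[h^--H^-]_+^{3/2}$, i.e. the preimage lies in $A^-$. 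The $B^\pm$ case is handled identically with $\sign v^-\le 0$: when $v^-<0$ the identity reads $[h^--H^-]_+=(h^+-H^+)-(|q|/C)^{2/3}$, and the defining band $-C[h^+-H^+]_+^{3/2}<q\le 0$ of $B^+$ is exactly what makes this right-hand side positive, so again the partner state is uniquely determined and lands in the complementary region.

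I expect the main obstacle to be the \emph{range} verifications rather than existence or uniqueness, the latter being immediate once the governing relation is affine: at each step one must confirm that the computed partner height exceeds the relevant weir and that the transported flux stays inside the defining band of the target set, and all of these reduce to the single monotonicity inequality above, applied on $\Gamma_u$, inside $A^\pm$, and at the endpoints of the $B^\pm$ bands. I would also keep an eye on the degenerate boundary $v=0$, i.e. $q=0$, where $\sign v^-=0$ kills the nonlinear term and the identity collapses to $[h^+-H^+]_+=[h^--H^-]_+$; this is the shared endpoint linking the $A$ and $B$ families, and one must check it belongs to $B^\pm$ as the conditions $hv\le 0$, $h>H$ in their definitions dictate.
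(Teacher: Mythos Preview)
Your proposal is correct and is exactly the ``straightforward analysis of the coupling conditions \eqref{eq:coup2}'' that the paper invokes; in fact the paper omits the proof entirely with precisely that phrase, so your argument is not merely aligned with but effectively supplies what the authors left implicit. The key reduction you identify --- that $h^-|v^-|=|q|$ makes the first line of \eqref{eq:coup2} an affine relation between the two positive parts once $q$ and $\sign v^-$ are fixed --- is the only idea needed, and your case-by-case verification of the range constraints via the equivalence $(q/C)^{2/3}<[h-H]_+\iff q<C[h-H]_+^{3/2}$ is the right mechanism throughout.
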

\begin{figure}[h]
  \centering
  \includegraphics[width=6.2cm]{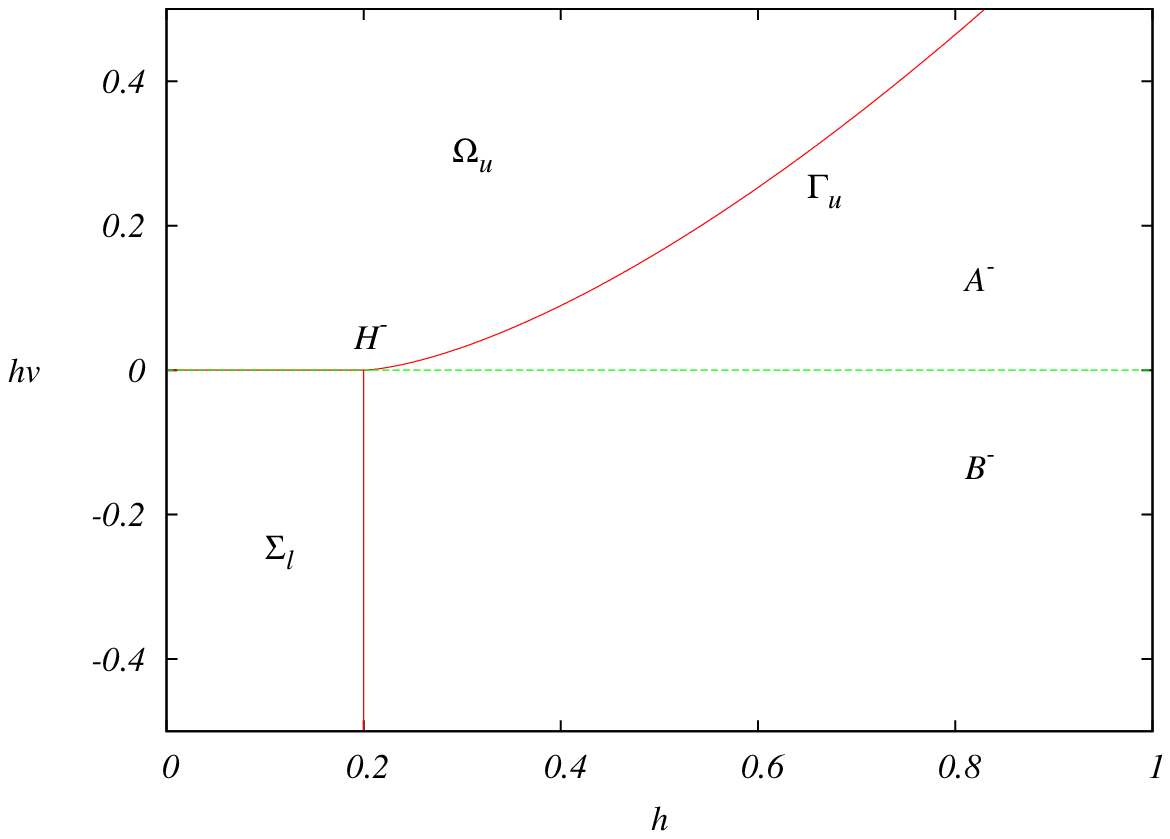}
  \includegraphics[width=6.2cm]{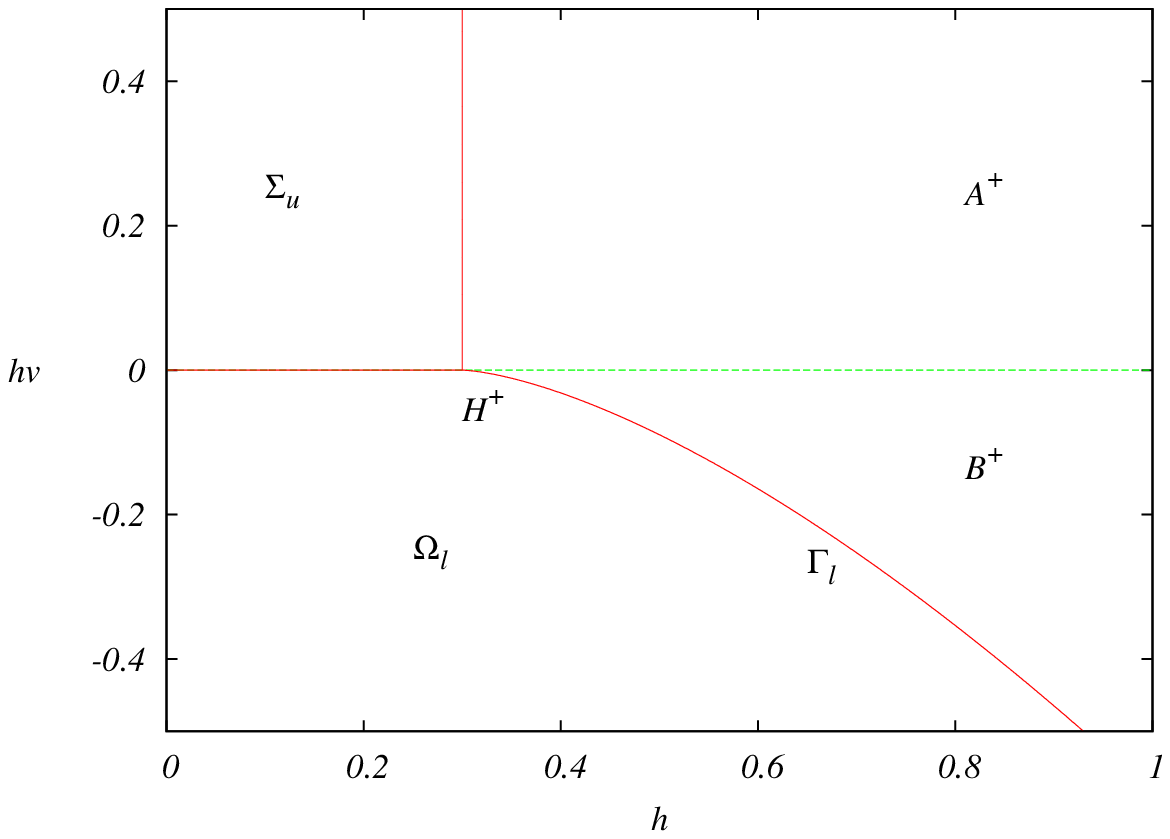}
  \caption{Representation of the regions introduced in Lemma \ref{lemma:cons}}
  \label{fig:CurvaD}
\end{figure}

We omit the proof since it is a straightforward analysis of the coupling conditions
\eqref{eq:coup2}.




We call $\Phi: A^-\cup B^-\to A^+\cup B^+$ the one to one map that associates
to any left state in $A^-\cup B^-$ the corresponding
right state in $A^+\cup B^+$ connected
through the coupling condition:
\begin{displaymath}
  \Phi(h,hv)=\left(H^++h-H^--\left(\frac{h|v|}{C}\right)^{\frac{2}{3}}
    \sign v,hv\right). 
\end{displaymath}

We also define
\begin{displaymath}
  \Phi(h,hv)=\left(\left(-vh/C\right)^{\frac{2}{3}}+H^+,hv\right)\in\Gamma_l,
  \quad \text{for any }(h,hv)\in\Sigma_l,
\end{displaymath}
in such a way that the left state $(h^+,h^+v^+)=\Phi(h^-,h^-v^-)$
is connected through the
coupling condition to the right state $(h^-,h^-v^-)$ for any\\
$(h^-,h^-v^-)\in\Sigma_u\cup A^-\cup B^- $ .

We are now able to show that the Riemann problem can be solved in the large.

\begin{proposition}
  For all states $(h_l,v_l)$ and $(h_r,v_r)$ in the subsonic region,
  the Riemann problem \eqref{eq:SH}, \eqref{eq:coupl}, \eqref{eq:riemdata}
  has a unique
  solution satisfying Definition \ref{def:rp}, whose constant states
  depend continuously on the initial data and are constructed glueing together
  a wave of the first family, the coupling condition and a wave of the second
  family.
\end{proposition}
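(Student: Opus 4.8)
The plan is to reduce the problem to the determination of the two trace states $(h^\pm,v^\pm)=(h,v)(t,0\pm)$, after which the solution is assembled exactly as prescribed by Definition \ref{def:rp}. First I would use that in the subsonic region $\lambda_1<0<\lambda_2$, so a wave of the first family travels with negative speed and a wave of the second family with positive speed. Consequently the restriction to $x<0$ of the Lax solution of \eqref{eq:riemdata2} reduces to a single $1$--wave precisely when $(h^-,v^-)$ lies on the forward Lax curve of the first family through $(h_l,v_l)$, i.e. $v^-=\mathcal{L}_1^+(h^-;h_l,v_l)$ from \eqref{Eq:Lax12x2}; symmetrically $(h^+,v^+)$ must lie on the reversed Lax curve of the second family through $(h_r,v_r)$, namely $v^+=\mathcal{L}_2^-(h^+;h_r,v_r)$ from \eqref{Eq:Lax32x2r}. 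Any subsonic state on these two arcs produces admissible waves entirely contained in the correct half--plane, so the task becomes: find one state on each arc so that the pair is linked by the coupling condition \eqref{eq:coup2}.

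Next I would parametrize both arcs by the flux $q=hv$, which is exactly the quantity preserved by \eqref{eq:coup2}. The key monotonicity facts are that on the subsonic part of the first--family arc the height is a strictly decreasing function $h^-(q)$ of the flux, while on the subsonic part of the second--family arc the height is a strictly increasing function $h^+(q)$; these follow from $\tfrac{d(hv)}{dh}=\lambda_1<0$ and $\tfrac{d(hv)}{dh}=\lambda_2>0$ on the rarefaction branches, together with a direct sign check on the shock branches. Restricting first to the generic regime in which both levels exceed the weir and $q>0$, the first line of \eqref{eq:coup2} reads $h^+=h^-+(H^+-H^-)-(q/C)^{2/3}$, so the matching reduces to the scalar equation
\[ F(q):=h^+(q)-h^-(q)-(H^+-H^-)+(q/C)^{2/3}=0 . \]
Since $(h^+)'(q)>0$, $-(h^-)'(q)>0$ and the last term is increasing, $F$ is strictly increasing, which already yields uniqueness on this branch.

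Existence then follows from the intermediate value theorem once I verify that $F$ changes sign as $q$ runs over the admissible flux interval, whose endpoints are dictated by the subsonic ranges of the two arcs and by the weir constraints $h^\pm>H^\pm$. The remaining regimes are organized by Lemma \ref{lemma:cons}: if the flux is so large that the left trace leaves the connectable set (region $\Omega_u$, respectively $\Omega_l$ on the right) no solution can use that branch, which caps $q$; when one level drops to the weir the relevant trace meets $\Gamma_u$ or $\Gamma_l$ and is matched to a state of $\Sigma_u$ or $\Sigma_l$, while the $B^\pm$ regions cover $v^\mp\le 0$. At each transition the map $\Phi$ of Lemma \ref{lemma:cons} glues the pieces continuously, so the extended $F$ stays continuous and strictly monotone over the whole admissible range and has exactly one root; the degenerate value $q=0$ corresponds to the no--overflow configuration with $v^\pm=0$, and the case $q<0$ is symmetric. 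Continuous dependence on $(h_l,v_l,h_r,v_r)$ is then inherited from the continuity of the two Lax arcs and of $\Phi$, since the unique intersection point varies continuously with the data.

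The main obstacle I expect is the bookkeeping of the case analysis of Lemma \ref{lemma:cons} and the proof that the scalar balance $F$ remains continuous and strictly monotone as the construction passes from the \emph{both--above--weir} regime to the boundary curves $\Gamma_u,\Gamma_l$ and the sub--weir regions: one must check that the flux ranges realized on the two arcs overlap the range permitted by the coupling, so that a root exists and is not pushed into $\Omega_u\cup\Omega_l$, and that the height--versus--flux monotonicity used above persists across the rarefaction/shock junctions of the Lax curves. Once this gluing is under control, both uniqueness and continuous dependence are immediate from the strict monotonicity of $F$.
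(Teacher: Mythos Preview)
Your strategy is correct and is essentially the same monotone--intersection argument the paper uses; the only genuine difference is the choice of parameter. You parametrize both Lax arcs by the common flux $q=hv$ and reduce the matching to a strictly increasing scalar function $F(q)$; the paper instead parametrizes by the height on the first Lax arc, pushes this arc forward through the coupling map $\Phi$ to obtain a single non--increasing curve $\gamma$ in the $(h,hv)$ plane (extended by the horizontal segment $hv=h^*v^*$, $0<h\le H^+$, for the $\Sigma_u$ regime), and then intersects $\gamma$ with the strictly increasing reversed second Lax curve. The two monotonicity facts you invoke, $d(hv)/dh=\lambda_1<0$ on the first arc and $d(hv)/dh=\lambda_2>0$ on the second, are exactly the ones the paper uses. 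The paper's packaging via $\gamma$ has the advantage that the ``below--weir'' case $h^+\le H^+$ is absorbed into the horizontal piece of $\gamma$, so no separate patching of $F$ is needed; conversely, your scalar--equation formulation makes uniqueness and continuous dependence on the data transparent.

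One point to watch: you restrict yourself to subsonic trace states, but the paper observes explicitly that the intersection point $(k,w)$ on the right may lie in the \emph{upper} supersonic region, and the corresponding left trace $(k^*,w^*)$ may be \emph{lower} supersonic. This does not break your argument, because on the reversed second Lax curve $d(hv)/dh=\lambda_2>0$ persists into the upper supersonic region (so $h^+(q)$ remains well defined and increasing), and the resulting $2$--wave still has positive speed while the $1$--wave remains on the negative side. But you should state this, rather than confine the construction to subsonic traces, otherwise existence can fail in the regime where the right pool is well below its weir and must receive a large flux.
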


\begin{proof}
  Given two states $(h_l,v_l)$ and $(h_r,v_r)$ in the subsonic region,
  we proceed in the following way for solving the Riemann
  problem. Draw the Lax curve of the
  first family in the $(h,hv)$ plane through $(h_l,v_l)$. Since, in the subsonic
  region, it is
  strictly decreasing,
  it intersects in one and only one point
  (with $h>0$) the convex and non decreasing curve
  $\Gamma_u$.
  We call the unique point of intersection $(h^*,v^*)$.  
  Then, we 
  consider the non increasing curve
  \begin{equation}
    \label{eq:connect}
    \gamma(h) =
    \begin{cases}
      \left(\frac{H^+}{h^*}h,h^* v^*\right)   & \text{for }h \le h^*\\
      \Phi\left(h,h\mathcal{L}^+_1(h;h_l,v_l)\right) & \text{for }h >
      h^*.
    \end{cases}
  \end{equation}
  Next, we take the
  inverse Lax curve related to the second characteristic field passing
  through the right state $(h_r,v_r)$. This curve is strictly increasing
  in the subsonic region and in the upper supersonic region,
  therefore it has one and only one point of intersection with
  $\gamma$ (for positive $h$), denoted by   $(k,w)$. This point might belong to 
  the upper supersonic region.
  The Riemann problem is finally solved in
  the following way: take the subsonic (or lower supersonic)
  state $(k^*,w^*)$ on the first Lax
  curve such that $kw = k^*w^*$ and connect
  $(h_l,v_l)$ to $(k^*,w^*)$ with a wave of
  the first family. This wave has negative velocity since the curve 
  belongs entirely to
  the subsonic region or to the subsonic region and the lower
  supersonic one. Then, the points $(k^*,w^*)$ and $(k,w)$
  satisfy the coupling conditions \eqref{eq:coupl}. 
   Finally $(k,w)$ and $(h_r,v_r)$ are connected by
  a wave of the second family which travels with positive velocity even if
  $(k,w)$ happens to be in the upper supersonic region, see Figure
  \ref{fig:rimprob}.
\end{proof}
\begin{figure}[h]
  \centering
  \includegraphics[width=6.2cm]{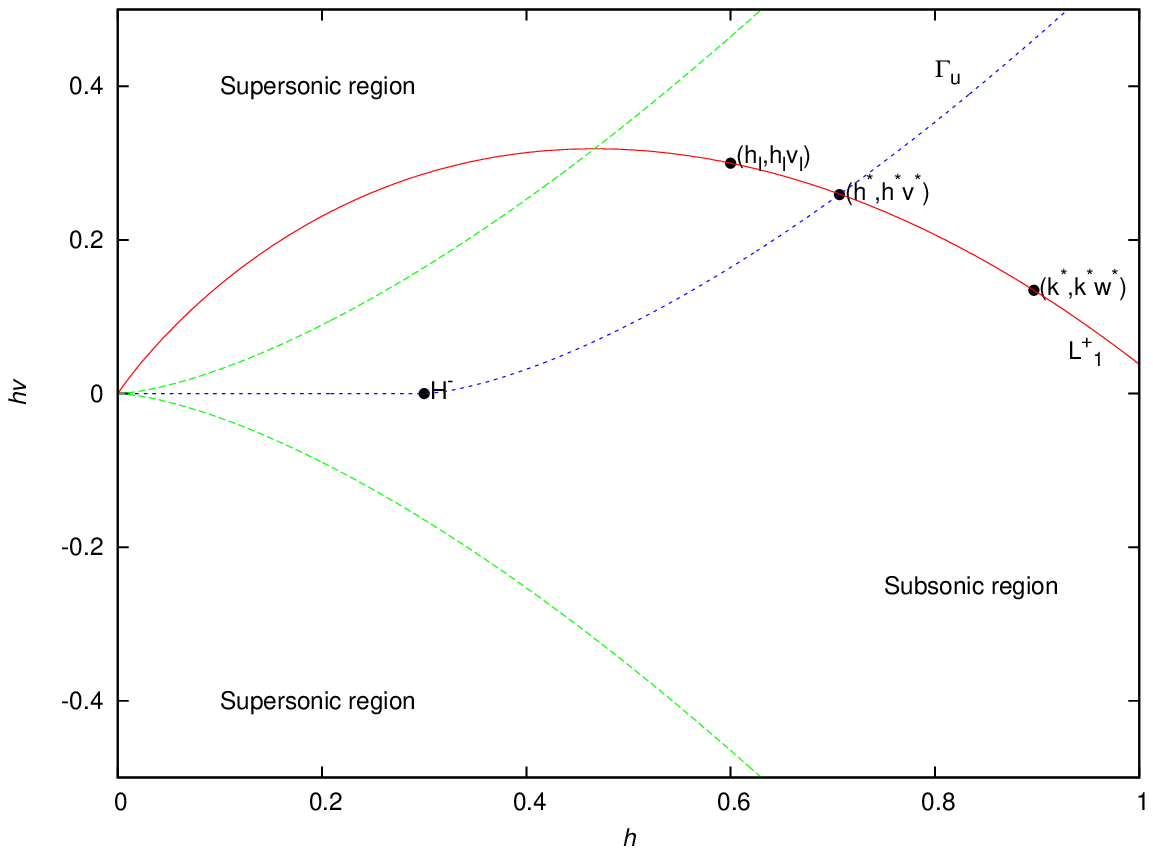}
  \includegraphics[width=6.2cm]{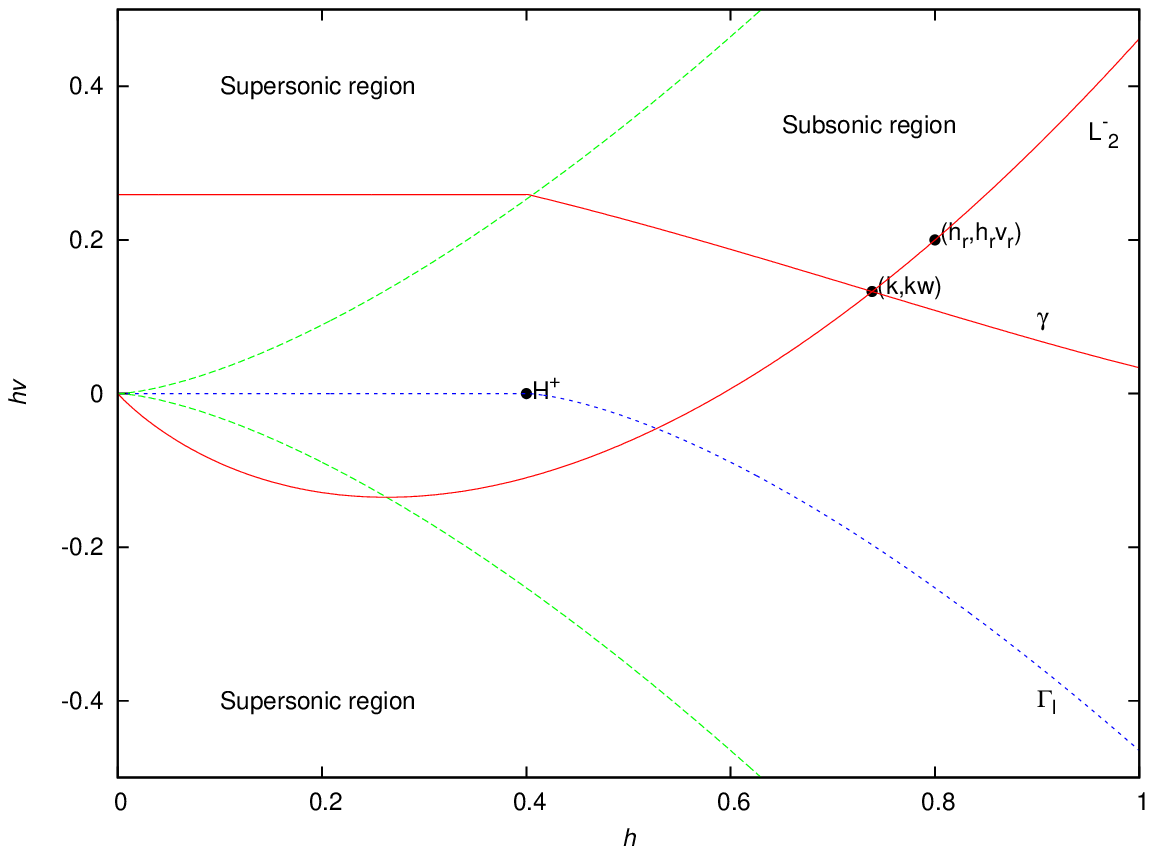}\\
  \includegraphics[width=6.2cm]{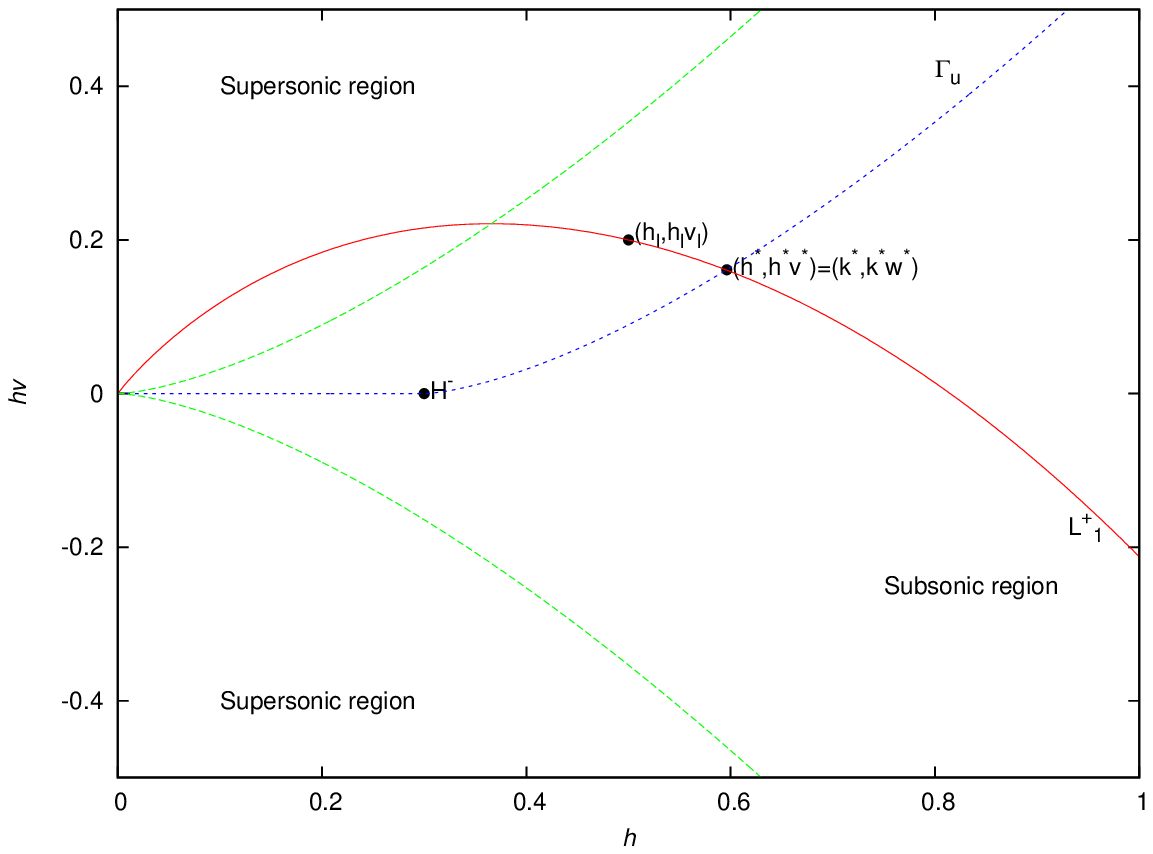}
  \includegraphics[width=6.2cm]{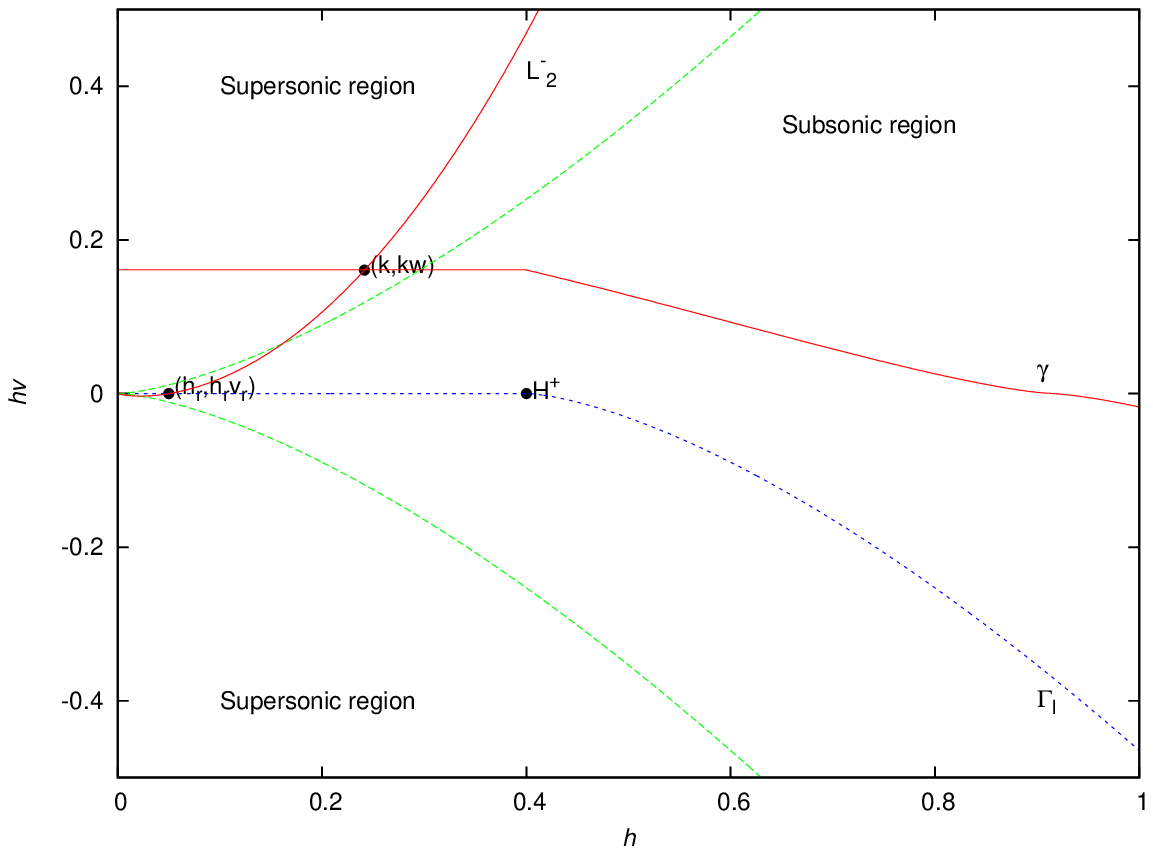}
  \caption{These figures describe how to solve two Riemann Problem (first and second row).
    The states on the left and on the right of the weir are represented respectively
    on the left and right figures. The green lines separate the subsonic and supersonic
    regions}
  \label{fig:rimprob}
\end{figure}

\section{A well posedness result}
This section is devoted to show the well--posedness of the Cauchy problem
for system \eqref{eq:SH} for initial data around a constant subsonic state
satisfying the coupling condition \eqref{eq:coupl}.
For simplicity we suppose that the water level to the right of the weir is below
the weir level, that is $h^+ < H^+$. This implies that the water can only flow
from the left to the right of the weir when its left level overflows
the weir.
\begin{theorem}\label{mainthm}
  Given $H_0^-,\;H^+$ and
  two constant states in the subsonic region $(h_0,v_0)$ and $(h_1,v_1)$
  such that
  \begin{equation}
    \label{eq:theomhy}
    h_0v_0=C\left[h_0 - H^-_0\right]_+^{\frac 32},\quad h_0v_0 = h_1v_1,\quad
    h_0 > H_0^-,\quad h_1 < H^+,
  \end{equation}
  then, there exists a closed domain
  \begin{displaymath}
    \mathcal{D} \subseteq \left\{ (h,v) \in (h_0,v_0)\chi_{(-\infty,0)}
      + (h_1,v_1)\chi_{(0,\infty)} +(\L1
        \cap \BV) \left( \reali; \reali^2 \right) \right\}
    \end{displaymath}
    containing all functions with sufficiently small total variation in $x>0$
    and $x<0$ and semigroups
  \begin{displaymath}
    S_t^{H^-} \colon \mathcal{D} \to \mathcal{D}
  \end{displaymath}
  defined for all $H^-$ sufficiently close to $H_0^-$,
  such that
  \begin{enumerate}[1)]
  \item \label{it:semigroup} for all $t,s\geq 0$ and
    $u\in\mathcal{D}$
    \begin{displaymath}
    S_0^{H^-}u=u,\qquad S_t^{H^-}S_s^{H^-} u = S_{t+s}^{H^-}u;
  \end{displaymath}
\item \label{it:Lipschitz}  
  for all $u,v\in \mathcal{D}$, $H_1^-,\;H_2^-$ in a suitable neighborhood of
  $H_0^-$
  and $t,t' \ge 0$:
    \begin{displaymath}
      \norma{S_t^{H^-_1}u - S_{t'}^{H^-_2}v}_{\L1}
       \leq 
      L \cdot \left\{\norma{u - v}_{\L1} 
      + 
      \modulo{t-t'}+t\cdot |H_1^- - H_2^-|\right\};
    \end{displaymath}
  \item \label{it:tangent} if $u\in\mathcal{D}$ is piecewise constant,
    then for $t$ small, $S_t^{H^-}u$ is the glueing of solutions
    to Riemann problems
    at the points of jump in $u$ and at the weir in $x=0$;
  \item \label{it:solution} for all $u_o \in {\mathcal{D}}$, the map
    $u(t,x) = \left(S_t^{H^-} u_o \right) (x)$ is a weak entropy solution
    to~\eqref{eq:SH}, \eqref{eq:coupl} (see
    \cite[Definition 4.1]{BressanLectureNotes},
    \cite[Definition 2.1]{ColomboGuerraB}).
  \end{enumerate}
  \noindent $S$ is uniquely characterized by~\ref{it:semigroup}),
  \ref{it:Lipschitz}) and~\ref{it:tangent}). 
\end{theorem}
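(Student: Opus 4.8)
The plan is to build the semigroup by wave--front tracking, following the Lipschitz semigroup theory of \cite{BressanLectureNotes, ColomboGuerraB}, with the weir at $x=0$ playing the role of a junction at which the Riemann solver of the previous Proposition is used. The first observation is that hypotheses \eqref{eq:theomhy} place the reference state in the regime of Lemma \ref{lemma:cons} in which a left state on $\Gamma_u$ is connected to a right state in $\Sigma_u$ with matched flux: indeed $h_0>H^-_0$ forces the left trace onto the overflow branch $\Gamma_u$, while $h_1<H^+$ keeps the right trace below the weir. Thus, near the reference state, the coupling \eqref{eq:coupl} acts as a pair of boundary conditions: the left canal ($x<0$) must keep its trace on the curve $\Gamma_u$, and the resulting flux $q=h^-v^-$ is transmitted to the right canal ($x>0$) as the condition $h^+v^+=q$. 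For piecewise constant data with small total variation in $x<0$ and $x>0$ I would construct approximate solutions solving the standard Riemann problem at interior jumps and the weir Riemann problem at $x=0$, approximating rarefactions by fans and using the simplified solver with non--physical fronts to keep the number of fronts finite.

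The core of the argument is the uniform $\BV$ bound, obtained through a Glimm--type functional $\Upsilon(t)=\tv\bigl(u(t,\cdot)\bigr)+K\,Q(t)$, where $Q$ is the usual quadratic interaction potential computed separately in each canal and $K$ is a large constant. Interior interactions are controlled by the classical quadratic estimates for \eqref{eq:SH}. The genuinely new estimate is the one at the weir. A $2$--front arriving from the left produces a reflected $1$--front, which readjusts the left trace onto $\Gamma_u$, and a transmitted $2$--front carrying the modified flux into the right canal; symmetrically, a $1$--front arriving from the right produces a reflected $2$--front and a transmitted $1$--front. In each case I would show that the total strength of the outgoing fronts is bounded by $\O$ times the strength of the incoming one, so that the weights in $\Upsilon$ can be chosen to make its jump across a weir interaction nonpositive up to a quadratic term absorbed by $K\,Q$. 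This weir estimate is the main obstacle: the reflection and transmission coefficients are governed by the transversality of $\Gamma_u$, and of the constant--flux level set that selects the right trace, to the Lax curves, and the maps involved contain the power $[h-H^-]_+^{3/2}$ together with its inverse, whose derivatives vanish, respectively blow up, as $h\to H^-$. I would circumvent this by confining the whole construction to a neighborhood of the reference state, where $h_0>H^-_0$ keeps the traces uniformly above $H^-$, $\Gamma_u$ strictly subsonic with bounded slope, and the coefficients uniformly bounded.

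With uniform total--variation and $\L1$ bounds, Helly's theorem yields a subsequence converging in $\Lloc1$; the limit is a weak entropy solution in the sense of \cite[Definition 2.1]{ColomboGuerraB}, and the closedness of \eqref{eq:coupl} under convergence of the traces at $x=0\pm$ gives the coupling condition, establishing \ref{it:solution}). Properties \ref{it:semigroup}) and \ref{it:tangent}) hold by construction, since for piecewise constant data and small $t$ the front--tracking solution is exactly the juxtaposition of the Riemann solutions at the interior jumps and at the weir. To obtain the Lipschitz estimate \ref{it:Lipschitz}) I would introduce a weighted functional $\Psi(u,w)$ equivalent to $\norma{u-w}_{\L1}$, with weights on the fronts of each canal chosen so that waves crossing the weir do not increase $\Psi$, again using the bounded reflection and transmission coefficients, and show that it is almost decreasing along pairs of front--tracking solutions; this gives $\L1$--Lipschitz dependence on the datum and, by the standard argument, on $t$.

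The dependence on the weir height enters only through the curve $\Gamma_u$, whose defining relation $h^-v^-=C[h^--H^-]_+^{3/2}$ contains $H^-$. Comparing two solutions with parameters $H^-_1,H^-_2$, the two versions of $\Gamma_u$ differ by $\O\,\modulo{H^-_1-H^-_2}$, so the left traces, and hence the transmitted fluxes, differ by $\O\,\modulo{H^-_1-H^-_2}$. This discrepancy is created at the weir at a bounded rate and carried into both canals at finite speed, so integrating over $[0,t]$ produces the term $t\cdot\modulo{H^-_1-H^-_2}$ in \ref{it:Lipschitz}). Finally, uniqueness under \ref{it:semigroup})--\ref{it:tangent}) follows from the characterization of Standard Riemann Semigroups: any Lipschitz semigroup tangent to this Riemann solver coincides with the constructed one, by the usual integral estimate bounding the $\L1$ distance between a semigroup trajectory and the front--tracking approximation generated by its own initial data.
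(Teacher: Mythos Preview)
Your outline is correct, but the paper takes a markedly shorter route. Instead of building the front--tracking approximation and the weir interaction estimates from scratch, the paper reduces everything to an existing well--posedness theorem for initial--boundary value problems. It folds the real line onto the half--line by setting
\[
U(t,x)=\bigl(h(t,-x),\,-(hv)(t,-x),\,h(t,x),\,(hv)(t,x)\bigr),\qquad x>0,
\]
so that~\eqref{eq:SH} on $\reali$ becomes a $4\times 4$ system on $\reali^+$ with boundary condition $b\bigl(U(t,0+)\bigr)=g(t)$, where
\[
b(U)=\Bigl(U_1-\bigl(-U_2/C\bigr)^{2/3}-H_0^-,\;U_4+U_2\Bigr),\qquad g(t)=\bigl(H^--H_0^-,\,0\bigr),
\]
and then invokes \cite[Theorem~2.2]{ColomboGuerraB}. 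The one substantive hypothesis to verify is the transversality condition $\det\bigl[Db(\bar U)R_3,\;Db(\bar U)R_4\bigr]\neq 0$, with $R_3,R_4$ the eigenvectors of positive speed at the reference state; the paper computes this determinant explicitly and finds it strictly positive in the subsonic region. That computation is precisely the algebraic content of your ``bounded reflection and transmission coefficients'' at the weir, isolated as a single $2\times 2$ determinant rather than spread across a wave--by--wave interaction analysis. The Lipschitz dependence on $H^-$ then comes for free from the dependence on the boundary datum $g$ in the cited theorem, since $\int_0^t|g-\bar g|\,d\tau=t\,|H_1^--H_2^-|$. Your approach is self--contained and makes the wave structure at the weir explicit; the paper's approach is far more economical, trades the Glimm functional and the stability functional $\Psi$ for a citation, and pinpoints exactly which scalar quantity (the determinant above) governs solvability.
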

\begin{proof}
  Following \cite[Proposition 4.2]{ColomboGuerraHertySchleper}, the $2\times 2$
  system \eqref{eq:SH} defined for $x\in\reali$ can be rewritten as the following
  $4\times4$ system defined for $x\in\reali^+$:
  \begin{equation}
    \label{eq:systemU}
    \begin{cases}
      \partial_t U + \mathcal{F}(U) =0  & (t,x)\in \reali^+\times \reali^+\\
      b\left(U(t,0+)\right)=g(t) & t\in \reali^+.
    \end{cases}
  \end{equation}
  the relation between $U$ and $u=(h,v)$, between $\mathcal{F}$ and the flow
  in \eqref{eq:SH} being:
  \begin{equation}
    \label{eq:newflux}
    U(t,x) =\left[
    \begin{array}{c}
      h(t,-x)\\
      -(hv)(t,-x)\\
      h(t,x)\\
      (hv)(t,x)     
    \end{array}
  \right]
  \qquad
  \mathcal{F}(t,x)=
  \left[
    \begin{array}{c}
      U_2\\
      \frac{U_2^2}{U_1}+\frac 12 g U_1^2\\
      U_4\\
      \frac{U_4^2}{U_3}+\frac 12 g U_3^2
    \end{array}
    \right]
  \end{equation}
  with $x\in\reali^+$; whereas the boundary conditions becomes
  \begin{displaymath}
    g(t)\dot=\left(
      \begin{array}{c}
        H^--H_0^-\\
        0
      \end{array}
    \right),\quad 
    b\left(U\right)\dot=
    \left(
      \begin{array}{c}
        U_1 - \left(-\frac{U_2}{C}\right)^{\frac 23}-H_0^-\\
        U_4 + U_2
      \end{array}
    \right).
  \end{displaymath}
  The thesis now follows from \cite[Theorem
  2.2]{ColomboGuerraB}. Indeed the assumptions $(\mathbf{\gamma})$,
  (\textbf{b}) and (\textbf{f}) are therein satisfied. More precisely
  the eigenvalues
  of the Jacobian of the flow in \eqref{eq:newflux} are
  \begin{displaymath}
    \frac{U_2}{U_1}\pm\sqrt{gU_1},\qquad \frac{U_4}{U_3}\pm\sqrt{gU_3};
  \end{displaymath}
  therefore in the subsonic region exactly two are positive and
  exactly two are negative. Since here $\gamma(t)=0$, $\dot\gamma(t)=0$,
  condition ($\mathbf{\gamma}$) in \cite[Theorem
  2.2]{ColomboGuerraB} is satisfied with $\ell=2$. Concerning condition
  (\textbf{b}), the positive eigenvalues with the corresponding eigenvectors
  evaluated
  at $\overline U = (h_0,-h_0v_0,h_1,h_1v_1)$ are
  \begin{displaymath}
    \Lambda_3=-v_0 + \sqrt{gh_0},\quad
    R_3=\left(
      \begin{array}{c}
        1\\
        -v_0+\sqrt{gh_0}\\
        0\\
        0
      \end{array}
\right)
  \end{displaymath}
  \begin{displaymath}
    \Lambda_4=v_1 + \sqrt{gh_1},\quad
    R_4=\left(
      \begin{array}{c}
        0\\
        0\\
        1\\
        v_1+\sqrt{gh_1}
      \end{array}
\right).
\end{displaymath}
Conditions \eqref{eq:theomhy} imply $b\left(\overline U\right)=0$, and
\begin{displaymath}
  \left[Db(\overline U) R_3,\;Db(\overline U) R_4\right]=
  \left[
    \begin{array}{cc}
      1+\frac{2}{3C}\left(\sqrt{gh}-v_0\right)\cdot
      \left(-\frac{U_2}{C}\right)^{-\frac 13}
      &0\\
      -v_0+\sqrt{gh_0}& v_1+\sqrt{gh_1}
    \end{array}
    \right].
  \end{displaymath}
  The determinant of the above matrix is given by
  \begin{displaymath}
    \left(1+\frac{2}{3C}\left(\sqrt{gh}-v_0\right)\cdot
    \left(-\frac{U_2}{C}\right)^{-\frac 13}\right)
    \left(v_1+\sqrt{gh_1}\right)
  \end{displaymath}
  which is strictly positive since $(h_0,v_0)$ and $(h_1,v_1)$ belong to the
  subsonic region.
  Thus condition (\textbf{b}) is satisfied.
  Concerning condition (\textbf{f}), system \eqref{eq:systemU} is not
  necessarily strictly 
  hyperbolic, for it is obtained glueing two copies of system \eqref{eq:SH}.
  Nevertheless, the two systems are coupled only through the boundary conditions,
  hence the whole wave front tracking procedure in the proof of
  \cite[Theorem 2.2]{ColomboGuerraB} applies.
  Concerning the Lipschitz dependence on the height $H^-$, observe
  that as in \cite[Theorem 2.2]{ColomboGuerraB} we have for 
  \begin{displaymath}
    g(t)= H_1^- - H_0^-, \qquad \bar g(t)= H_2^- - H_0^-,
  \end{displaymath}
  and hence
  \begin{displaymath}
    \int_0^t\left|g(\tau)-\bar g(\tau)\right|\, d\tau = t\cdot
    \left|H_1^- - H_2^-\right|.
  \end{displaymath}
\end{proof}
\section{Computational results}
We present some numerical results on pooled steps using a
finite--volume method in the conservative variables to solve for the
system dynamics in each canal. The coupling conditions
(\ref{eq:coupl}) induce boundary conditions for each canal at each
time--step.  For given data $U_{0}^{\pm}:=(h^{\pm}_{0},
(hv)^{\pm}_{0})$ close to the weir the conditions yield the boundary
states at each connected canal. In the numerical computation of the
boundary states we proceed as in Section \ref{sec:RP} using Newton's method
applied to (\ref{eq:coupl}) where $h^{\pm}$ and $(hv)^{\pm}$ are given
by the forward (backwards) 1-(2) Lax--wave curves through the initial
state $h^{\pm}_{0}, (hv)^{\pm}_{0}$.

According to the solution of the Riemann Problem described in
Section \ref{sec:RP} there are three possible scenarios. If
$h^\pm\le H^\pm$ the coupling condition \eqref{eq:coupl} reduce to
$v^\pm =0$ and no water passes the weir. If $h^->H^-$ and $h^+\le H^+$
the water is overflowing the weir from the left to the right. 
If $h^+>H^+$ and $h^-\le H^-$
the water is overflowing the weir from the right to the left.
If $h^+>H^+$ and $h^-> H^-$
the water is flowing from the left or from the right depending on the
sign of $(h^--H^-) - (h^+-H^+)$.

Now, we present a numerical result for overflowing three connected
pooled--steps. Each canal has length $L=1$ and the height of each weir
is 1.5.  We simulate two situations. In the first case the initial
water level in each canal is low (equal to $H^{-})$ in the second case
the canals are already full (height is equal to $H^{+}$). In both case
the water initial is still $v=0$ and a wave with a height $h=2\;
H^{+}$ and $hv = 5$ enters on the first canal. This wave lasts until
$T=20$ and is then followed by a wave of height $h=H^{-}$ and zero
flux. The simulation time is $T=60$ for both scenarios.  We present
snapshots of height and velocity at different times. The solid lines
are the weir, the dotted line is ground level. It is a pooled step and
therefore the first canal is on a higher level above ground than the
last one.
\par In the first scenario (see Figure \ref{fig05}) the wave enters
the system of pooled steps and overflows the connected canals. After
this wave passed the system slowly reaches again an almost steady
state. In the second scenario (see Figure \ref{fig06}) we observe
initial dynamics due to the overflow of the pooled steps. On the
second canal these dynamics interact with the incoming wave. After the
wave passed the system slowly reaches a steady state with heights
below critical (i.e., $h \leq H^{+}$) in the first and second
canal. The water is still flowing over the third weir in this
simulation.

\paragraph{Acknowledgments} \par
  We thank Holger Sch\"uttrumpf, Institut f\"ur Wasserwirtschaft, RWTH
  Aachen University, for pointing out this interesting problem to
  us. The work has been supported by the German Research Foundation
  HE5286/6-1 and DAAD 50727872 and by the Vigoni project 2009.
  The first and third authors acknowledge
  the warm hospitality of RWTH Aachen University where part of this work was 
  completed. 

\newpage
\begin{figure}[h!]
\center
\epsfig{file=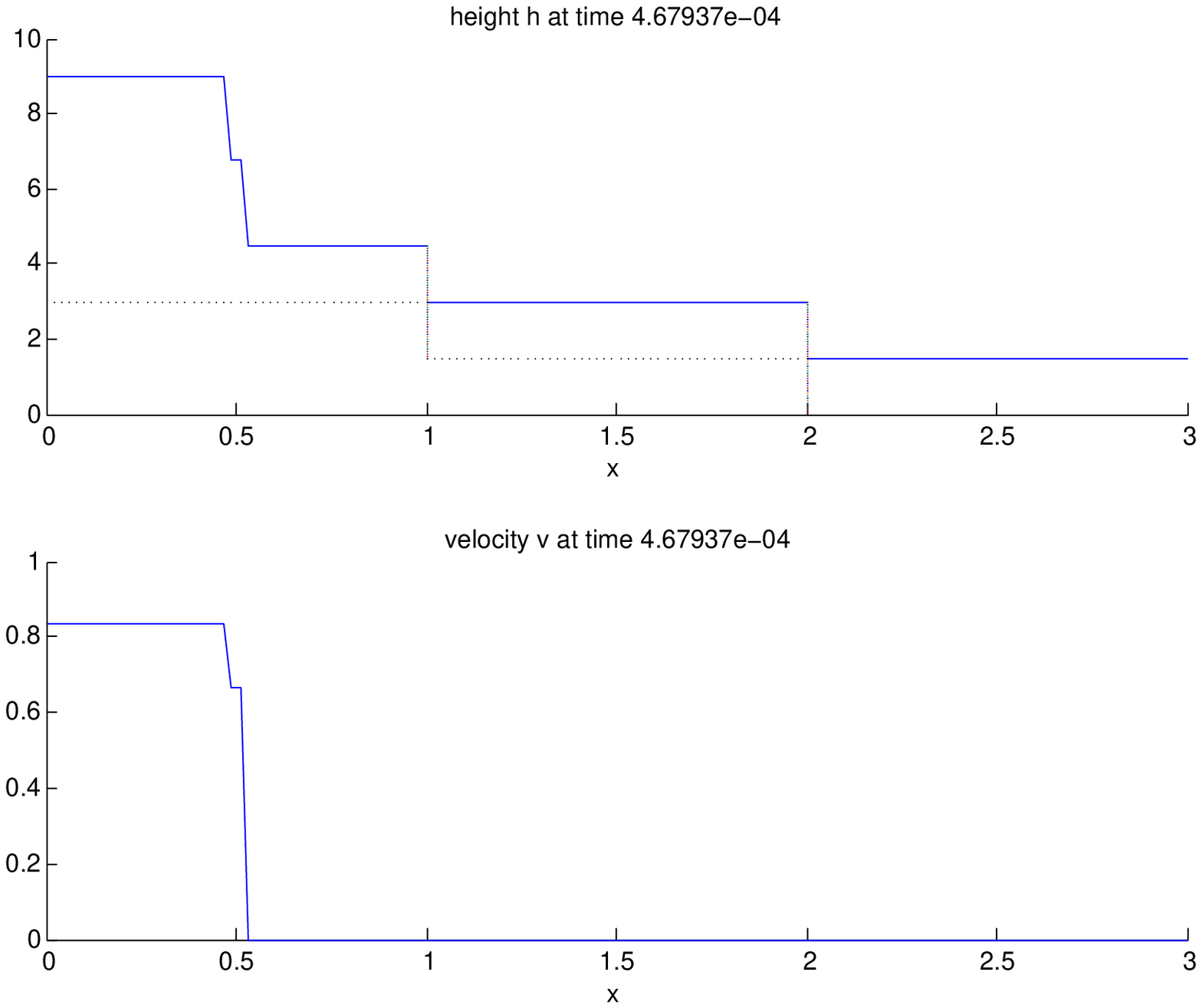,width=.3\textwidth}
\epsfig{file=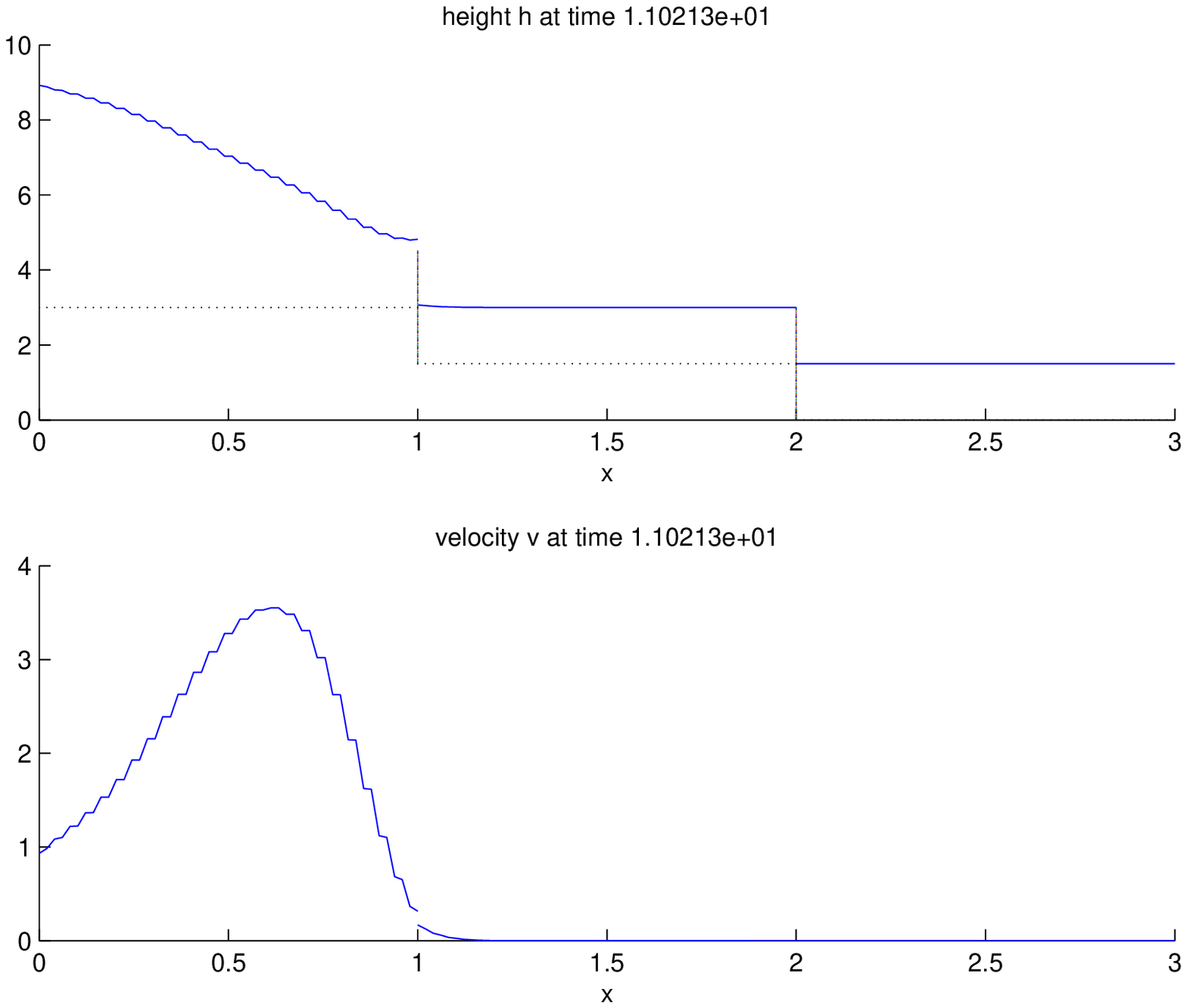,width=.3\textwidth}
\epsfig{file=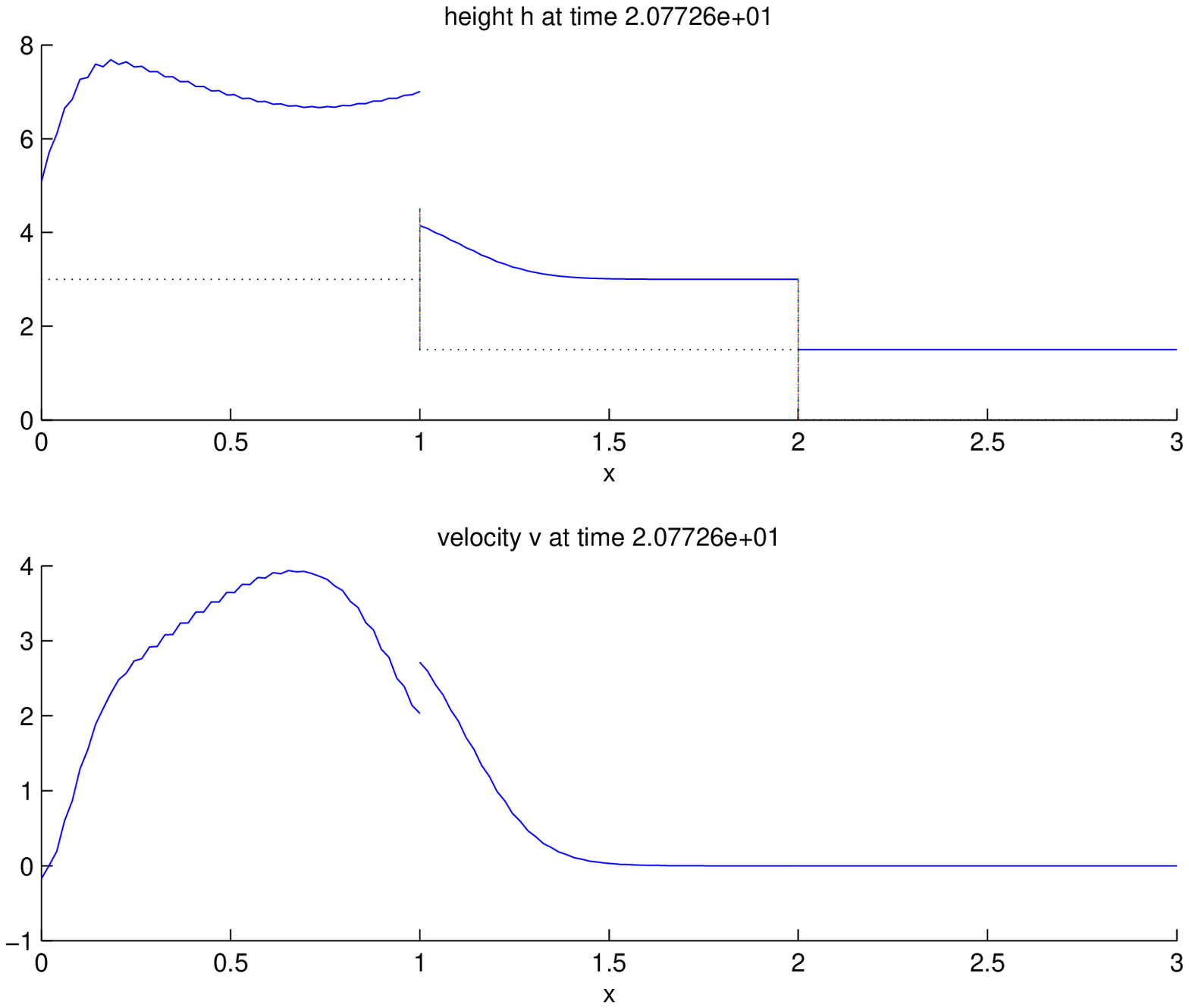,width=.3\textwidth} \\[10mm]
\epsfig{file=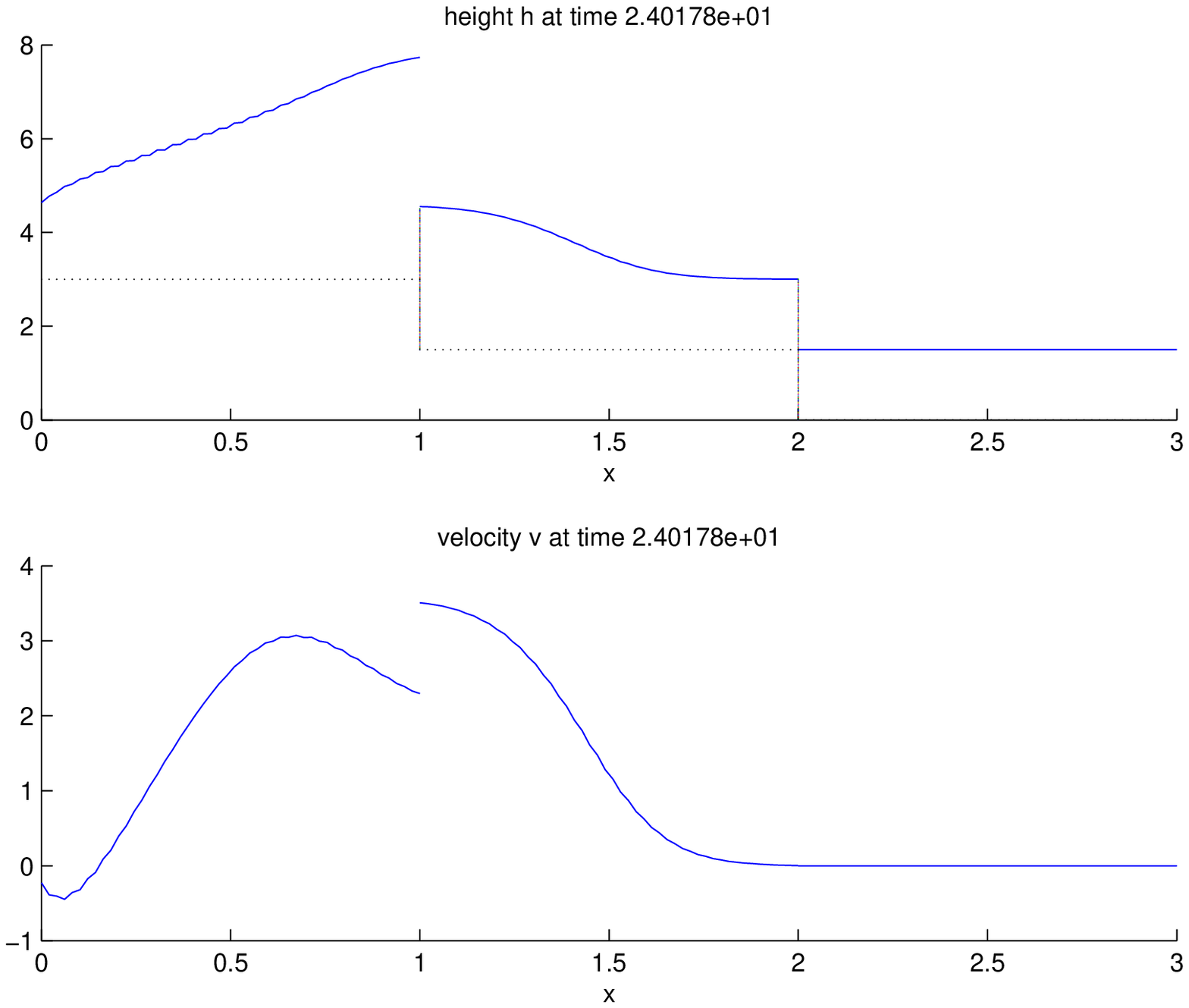,width=.3\textwidth}
\epsfig{file=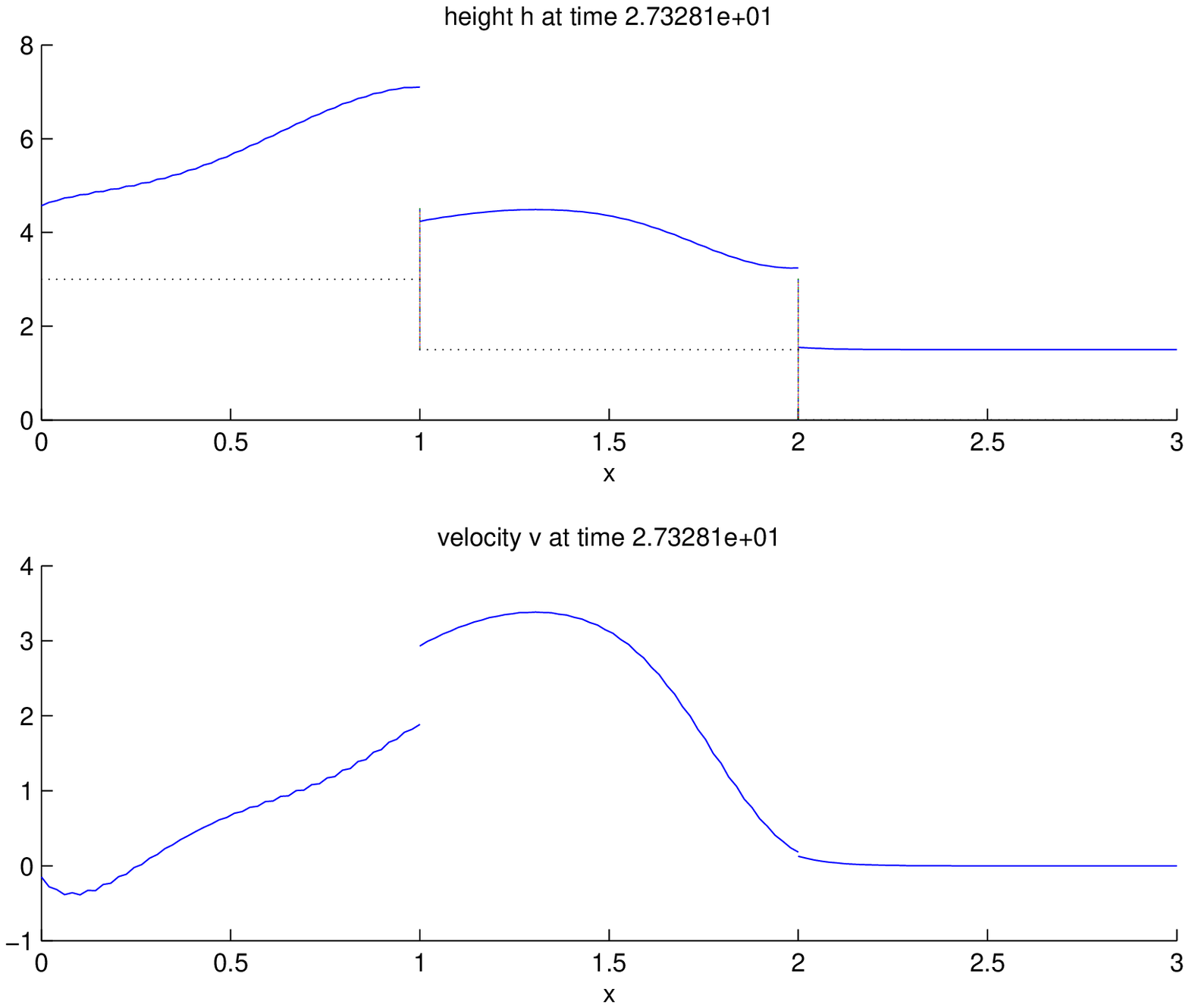,width=.3\textwidth} 
\epsfig{file=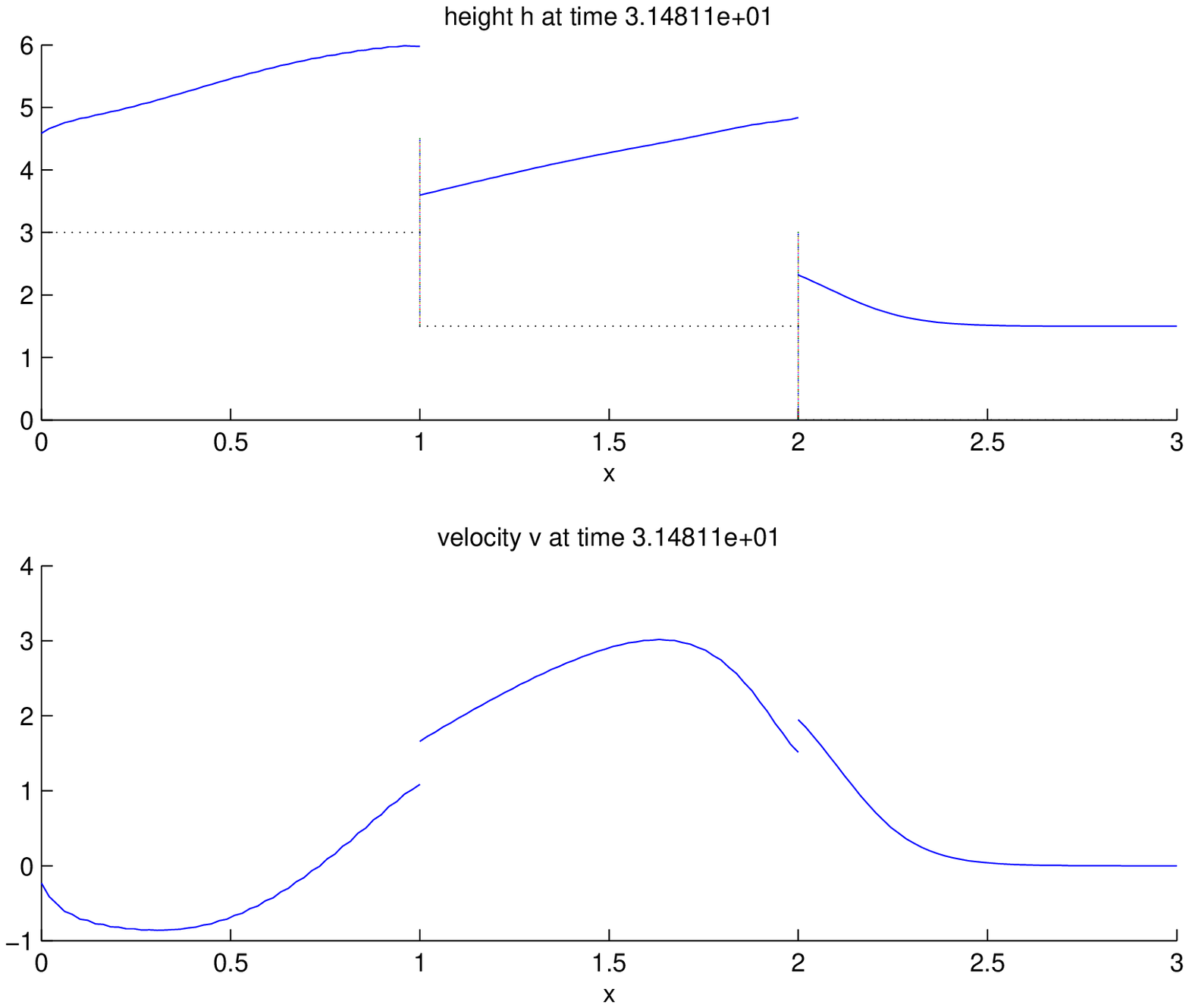,width=.3\textwidth} \\[10mm]
\epsfig{file=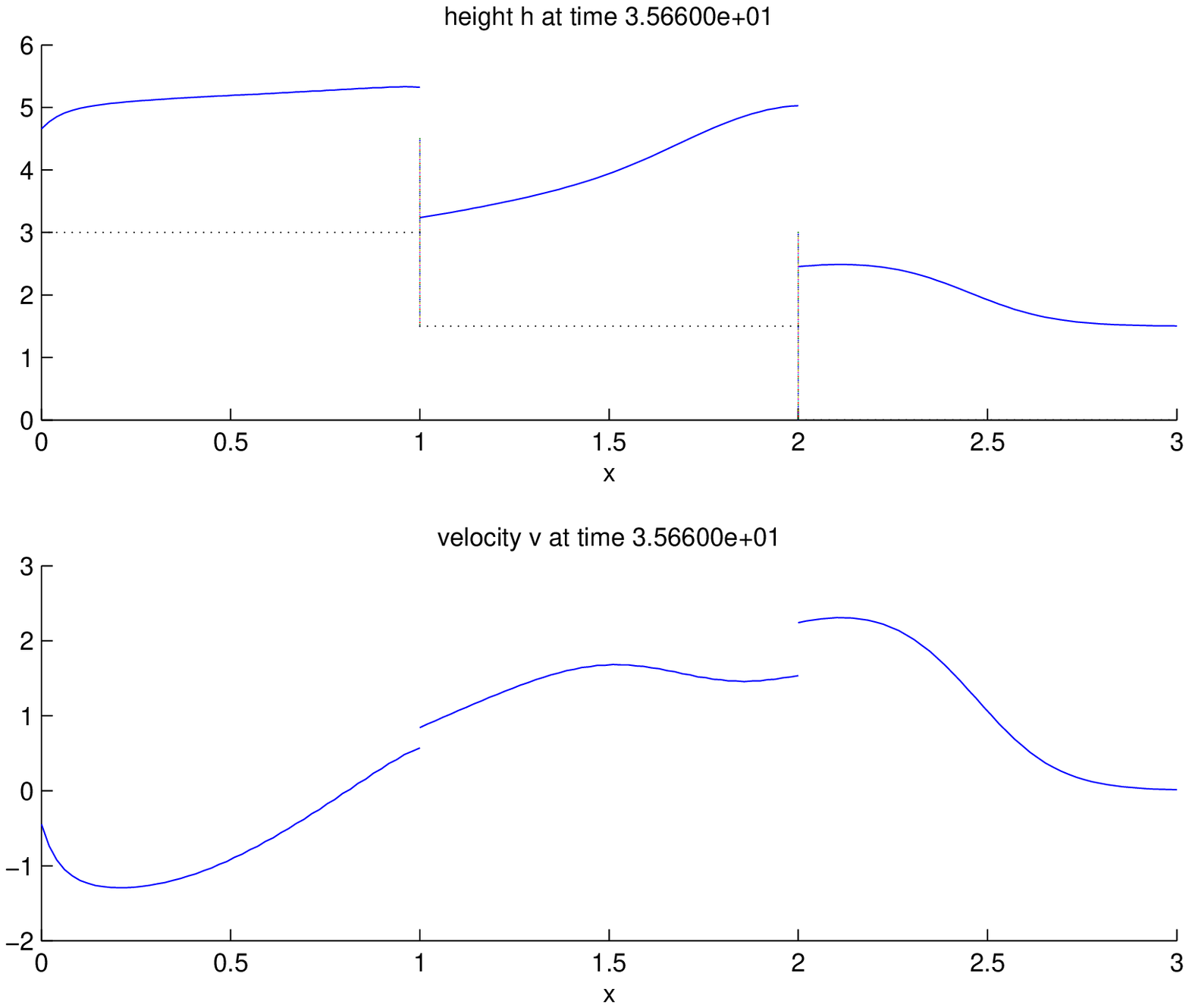,width=.3\textwidth}
\epsfig{file=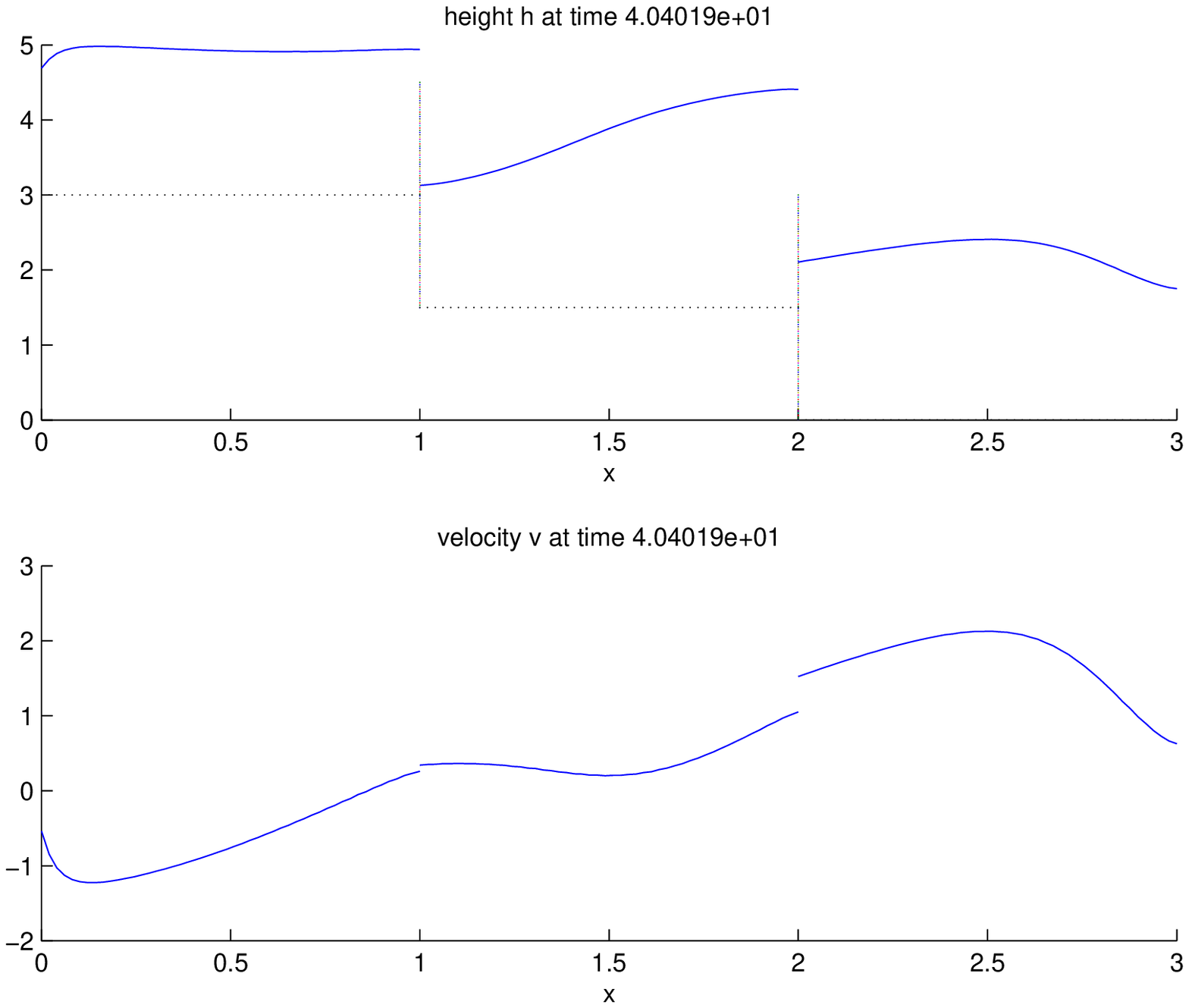,width=.3\textwidth} 
\epsfig{file=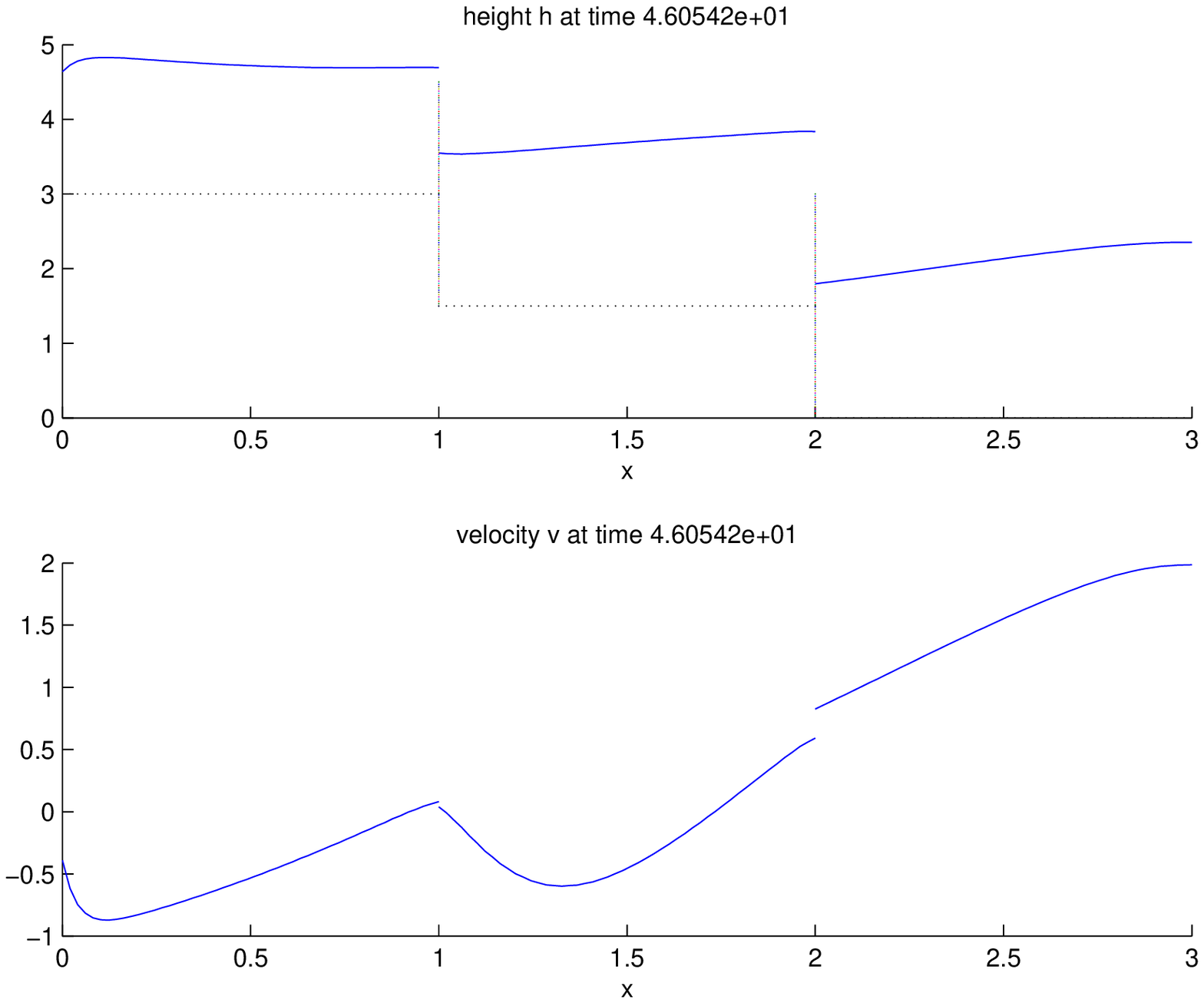,width=.3\textwidth} \\[10mm]
\epsfig{file=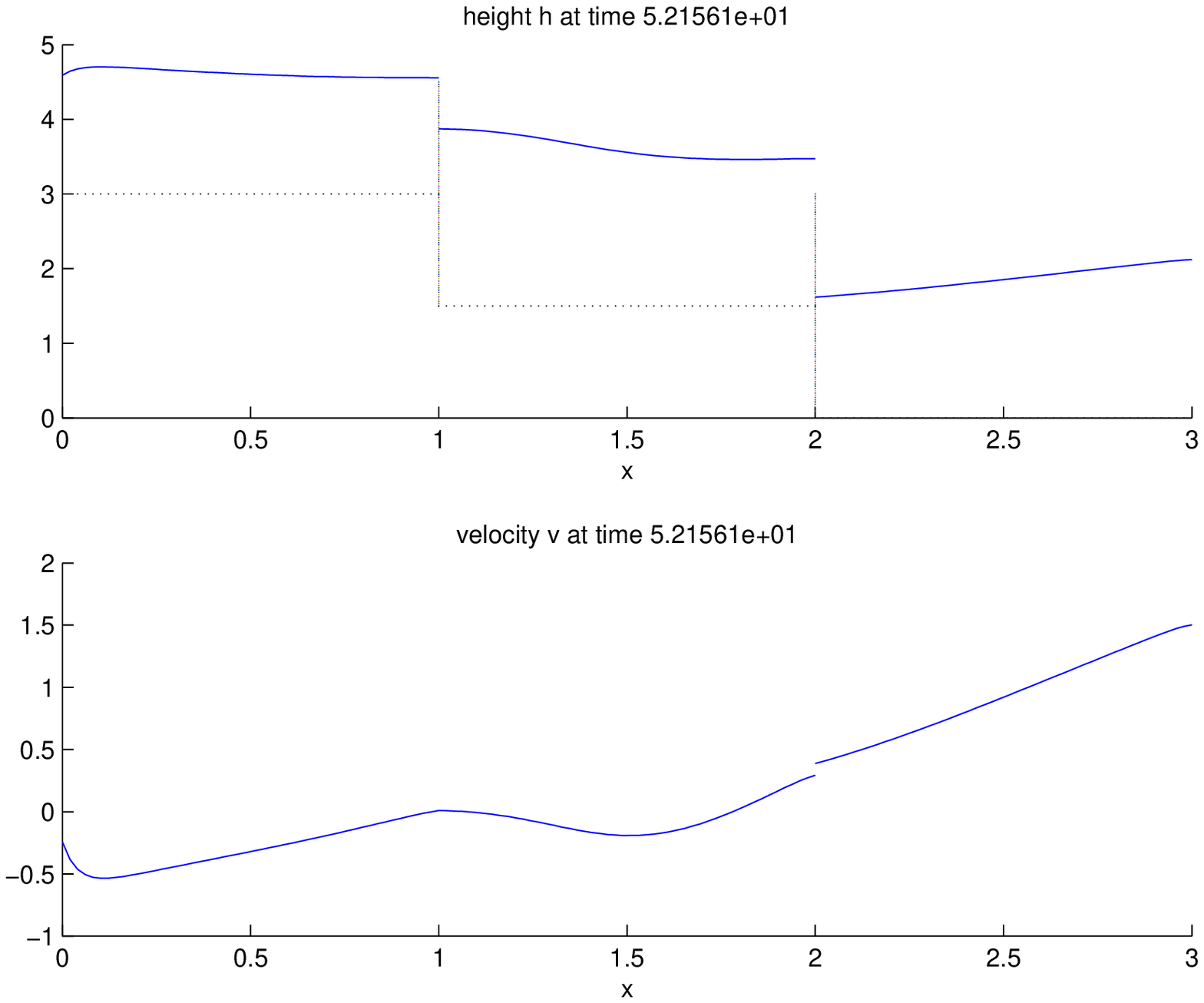,width=.3\textwidth}
\epsfig{file=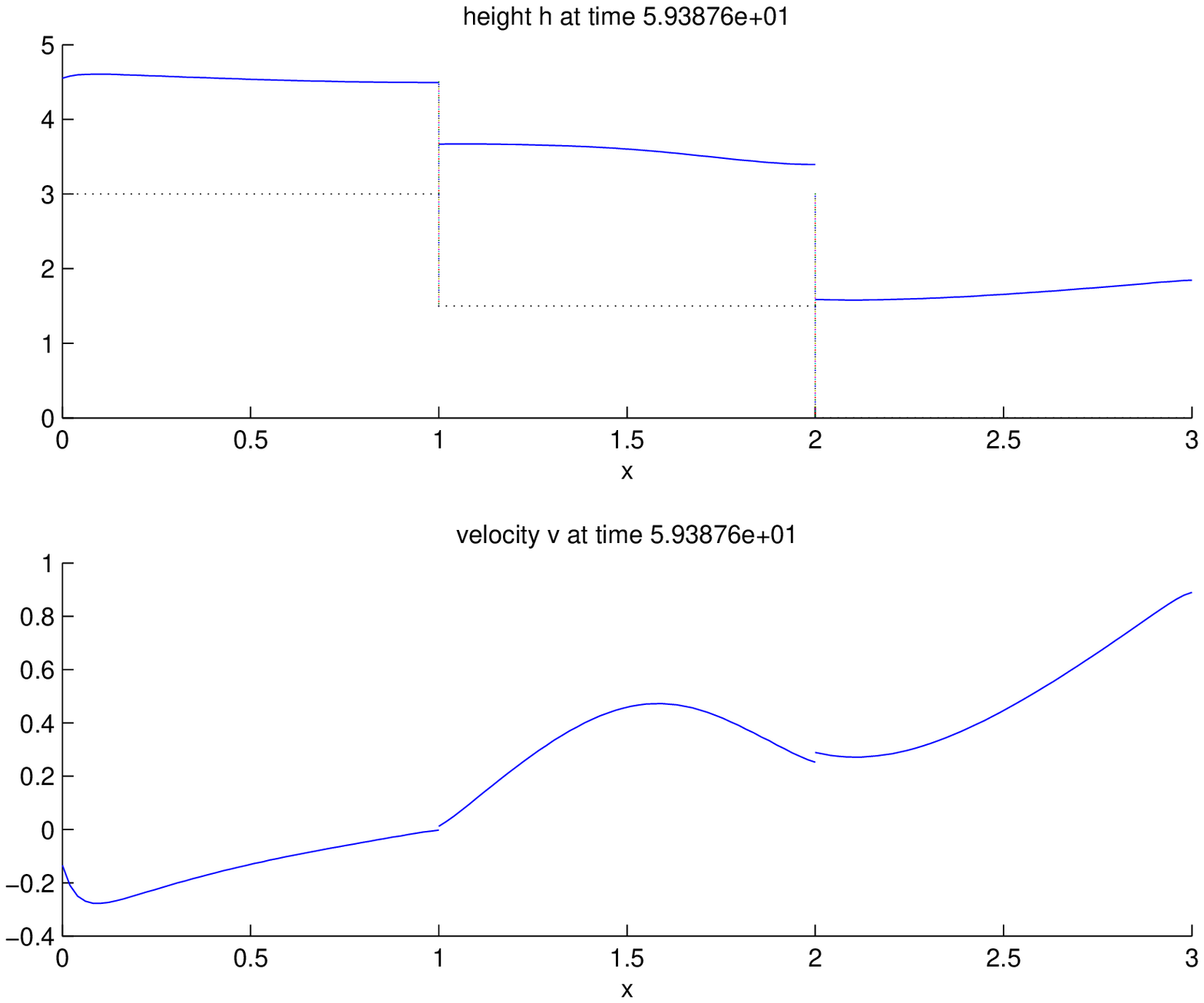,width=.3\textwidth} 
\caption{ Simulation results for a strong wave entering the pooled step. Initially, the water level on each step 
is below critical. The solution $(h,v)$ at different times is depicted.  The time increases from left to right and top to bottom. }
\label{fig05}
\end{figure}
\newpage
\begin{figure}[h!]
\center
\epsfig{file=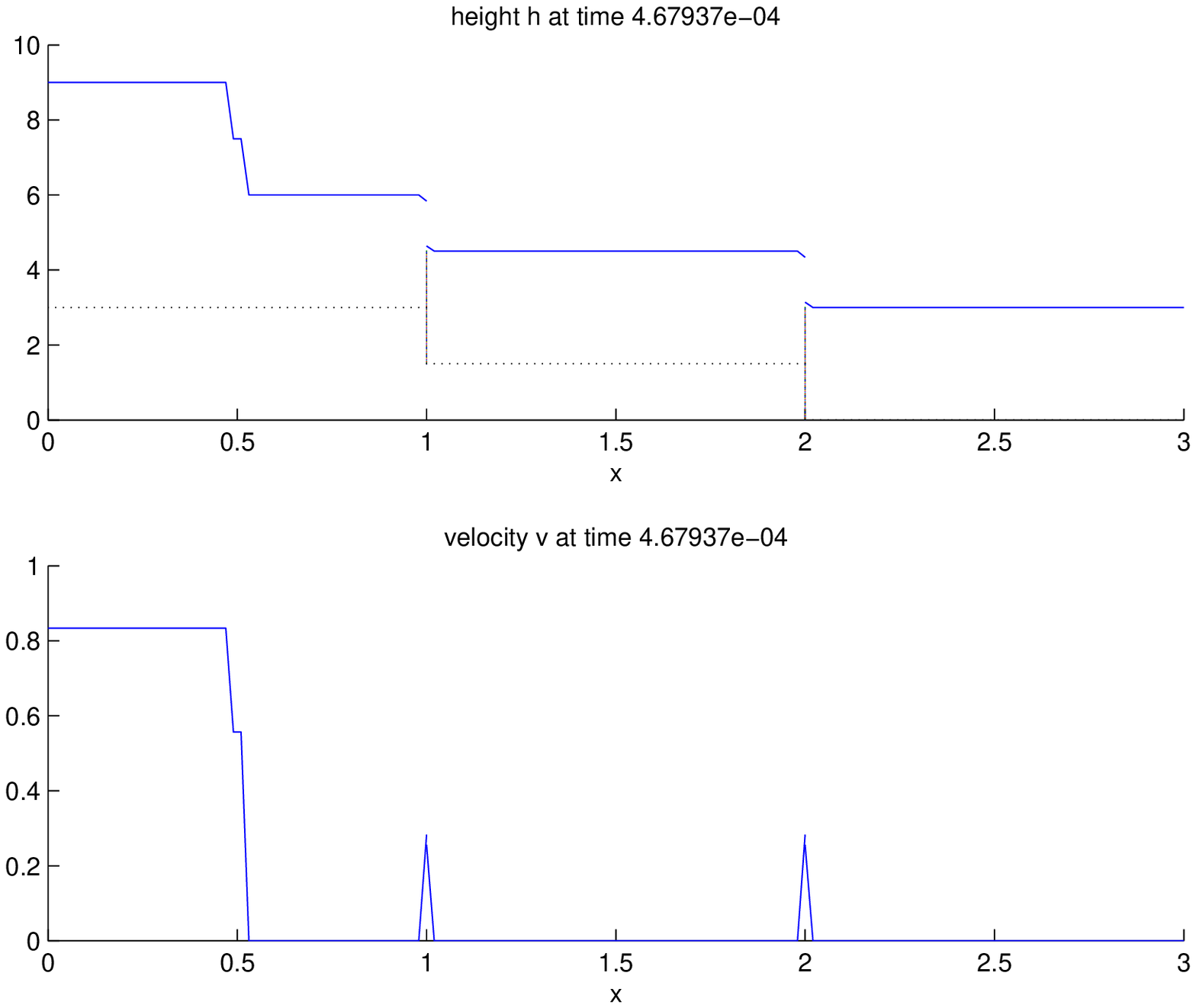,width=.3\textwidth}
\epsfig{file=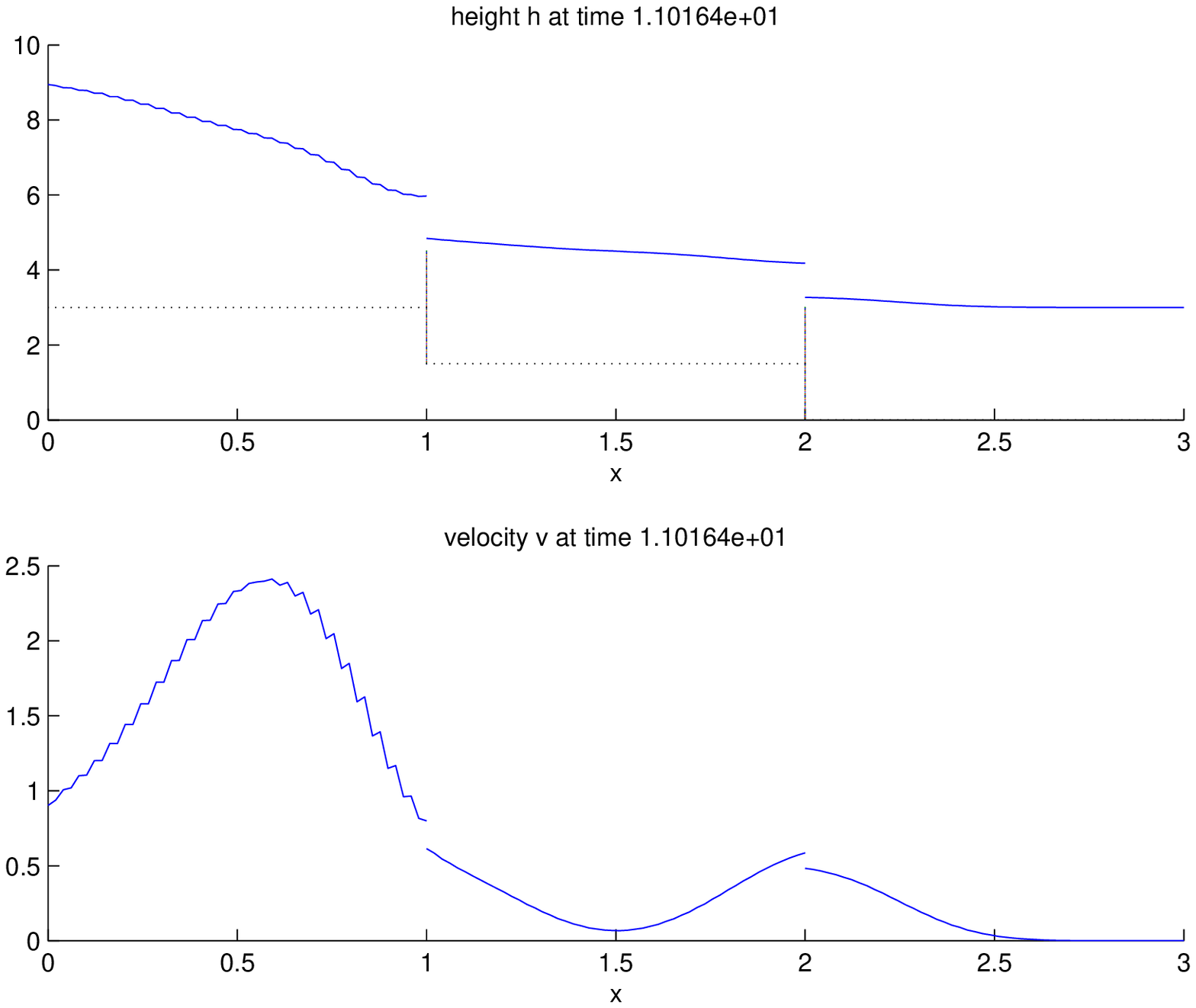,width=.3\textwidth}
\epsfig{file=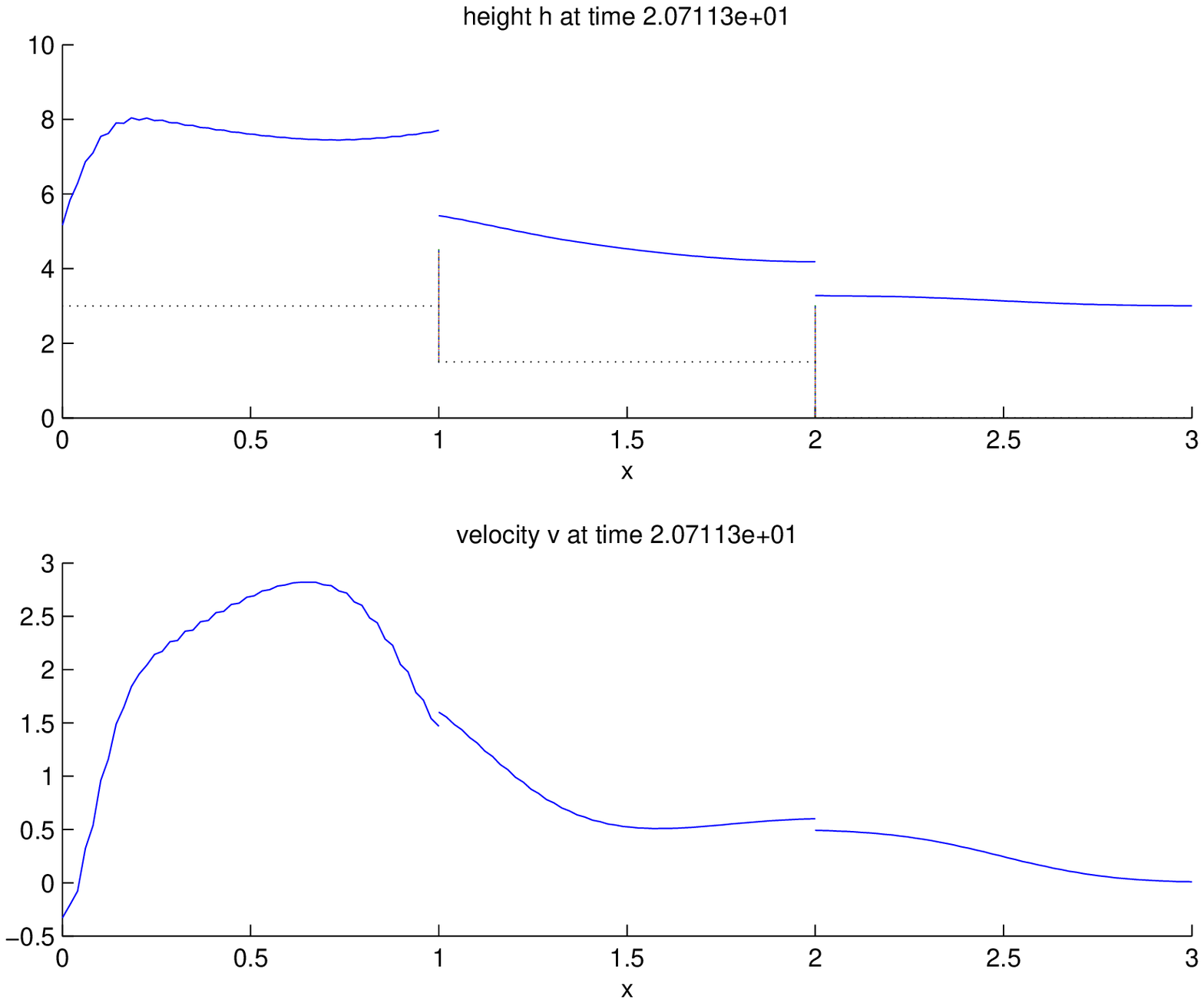,width=.3\textwidth} \\[10mm]
\epsfig{file=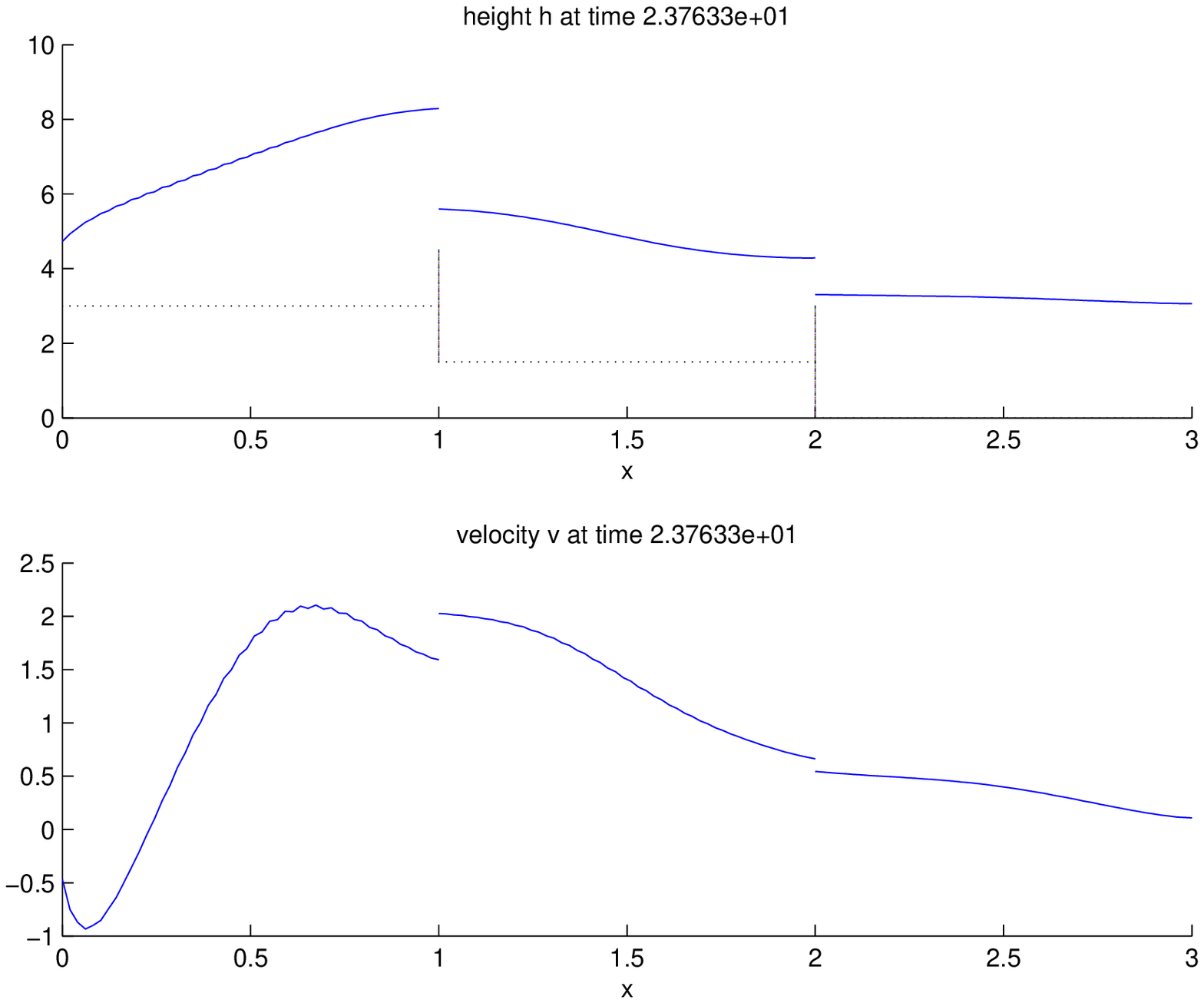,width=.3\textwidth}
\epsfig{file=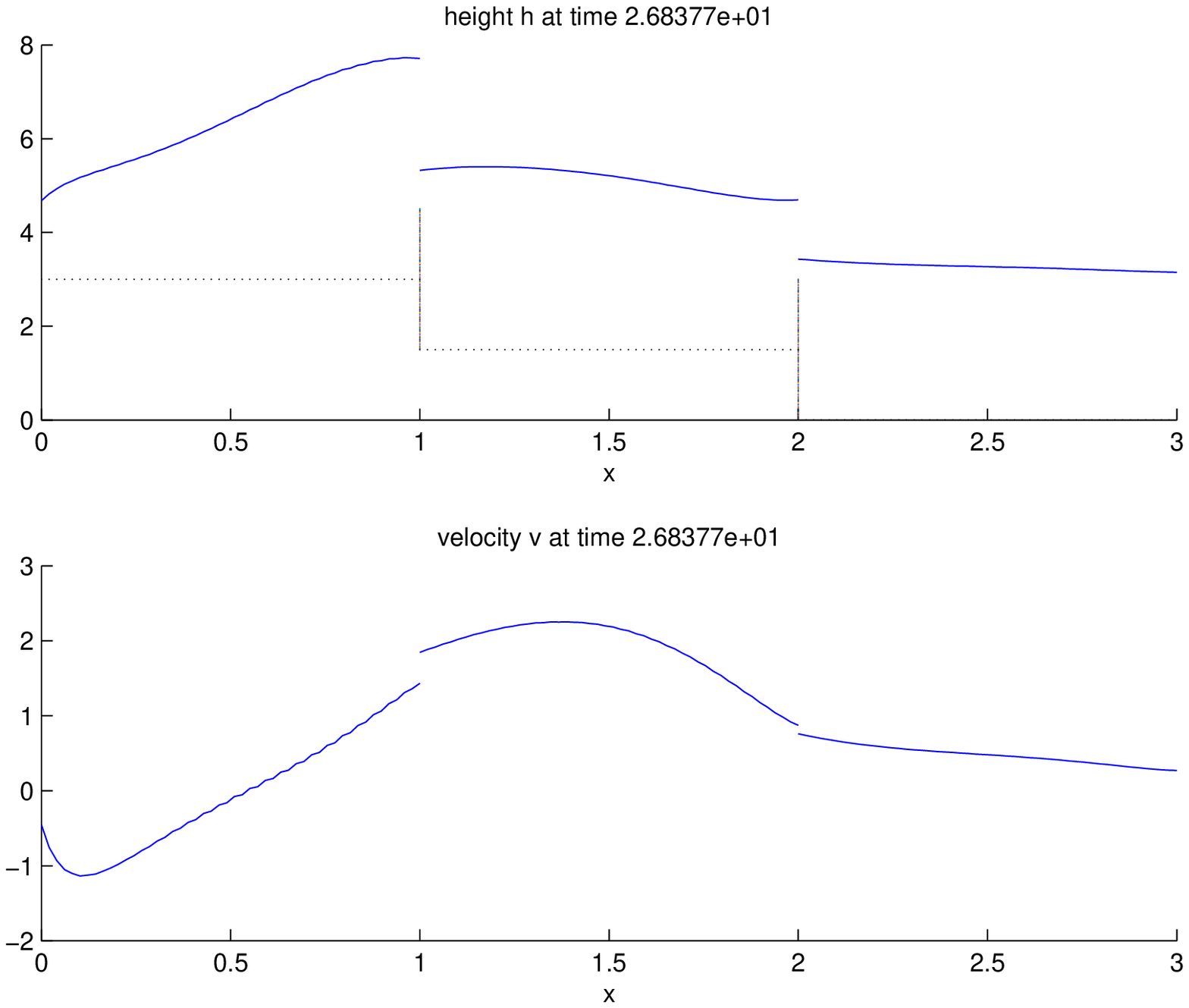,width=.3\textwidth} 
\epsfig{file=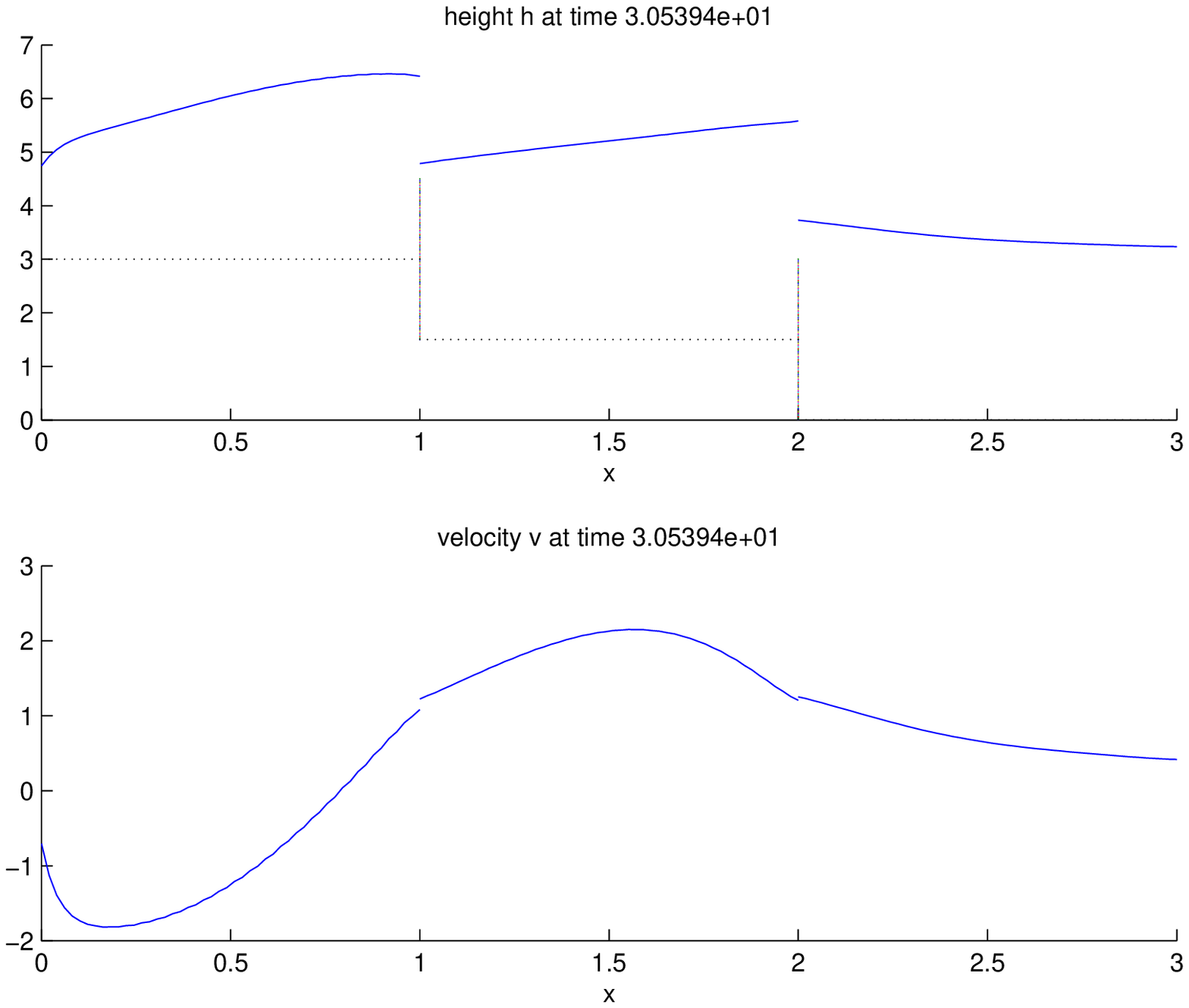,width=.3\textwidth} \\[10mm]
\epsfig{file=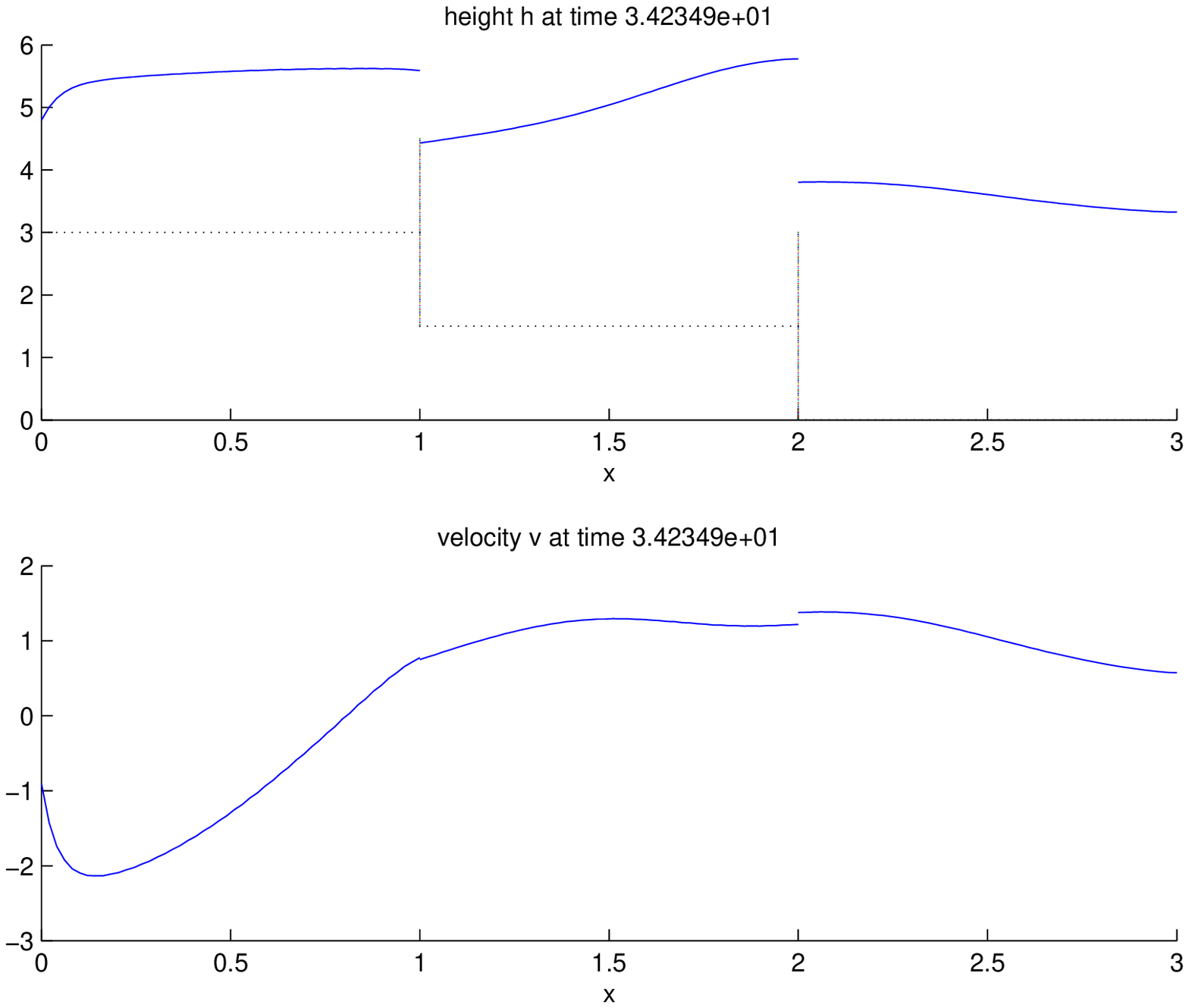,width=.3\textwidth}
\epsfig{file=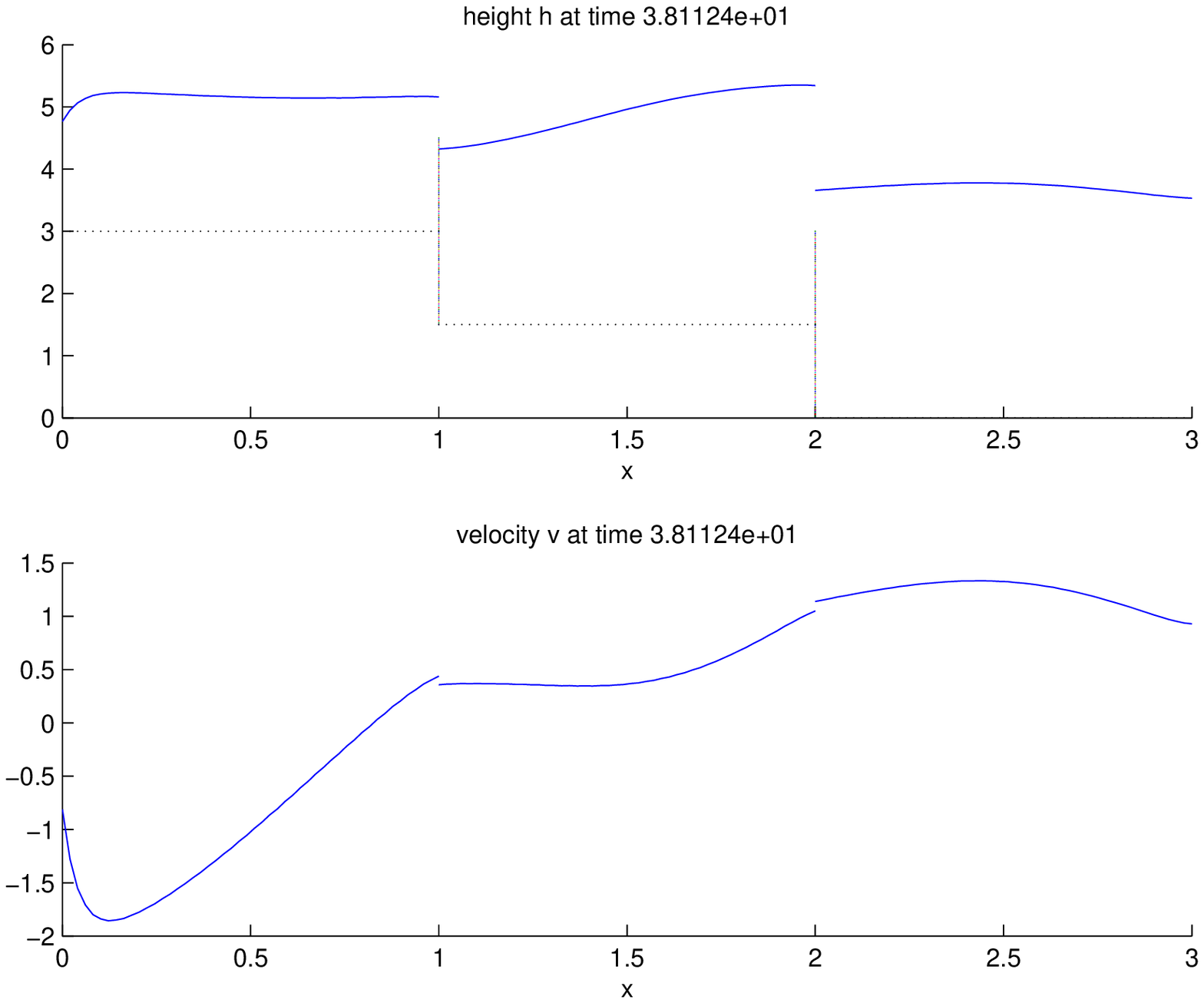,width=.3\textwidth} 
\epsfig{file=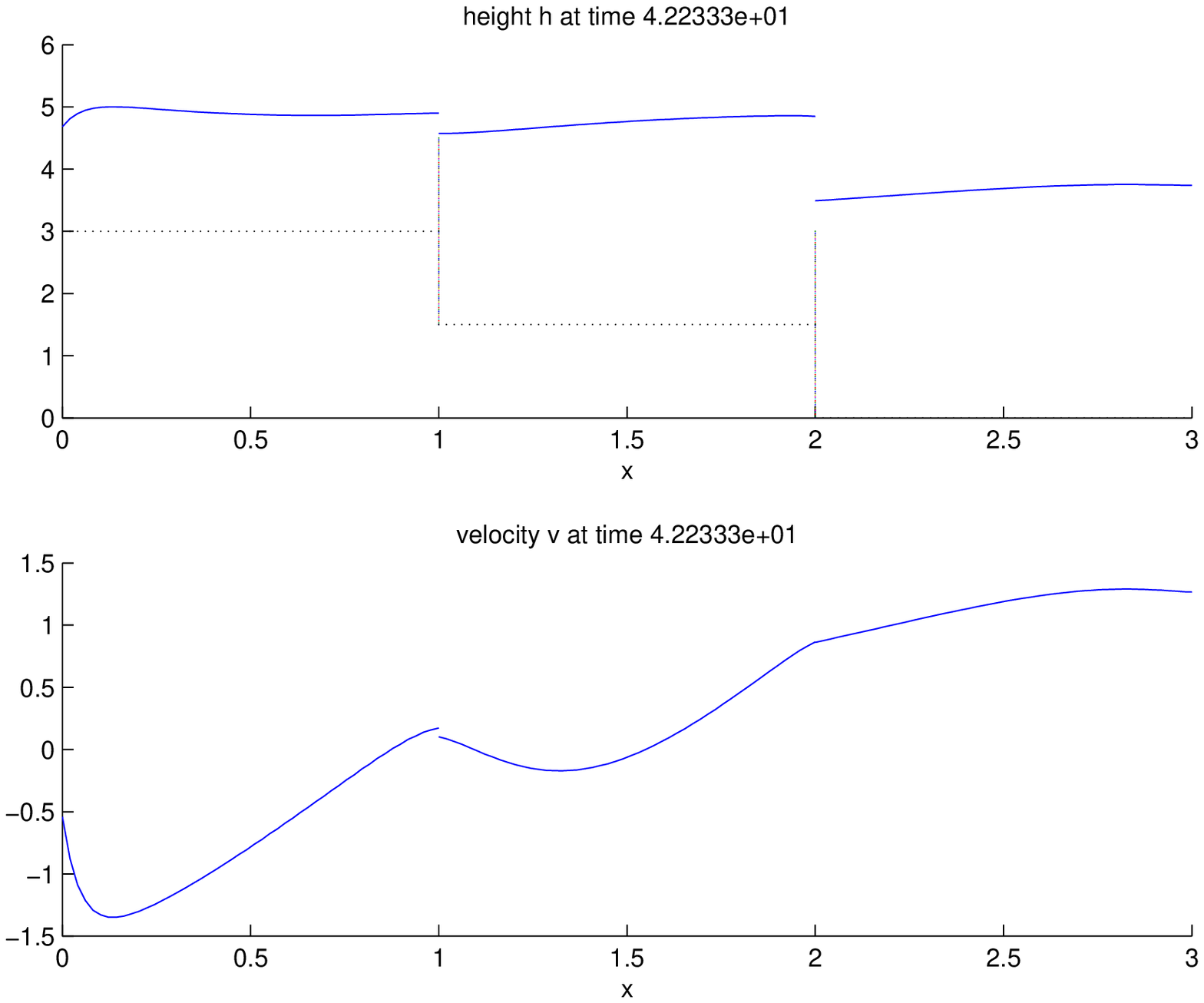,width=.3\textwidth} \\[10mm]
\epsfig{file=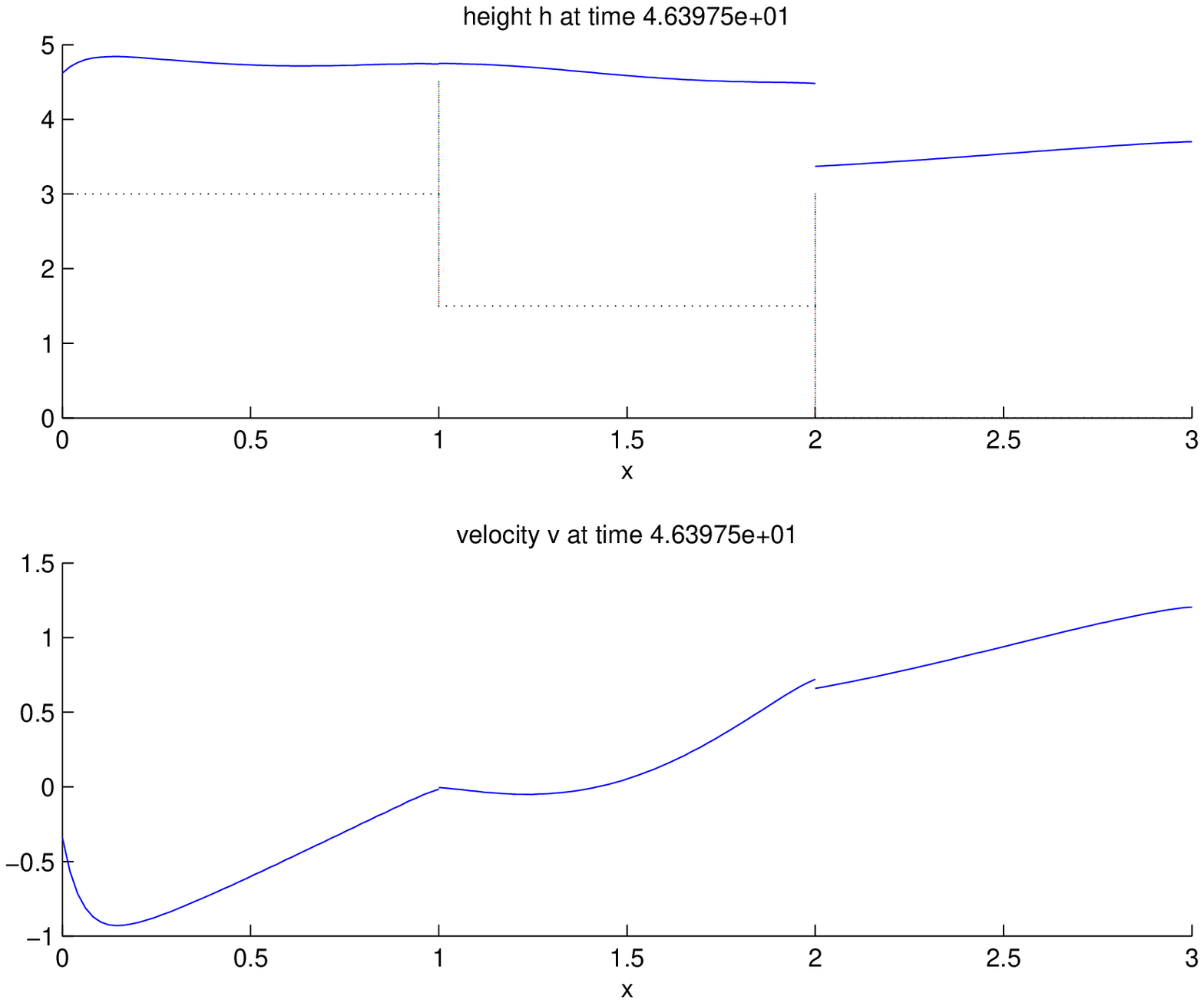,width=.3\textwidth} 
\epsfig{file=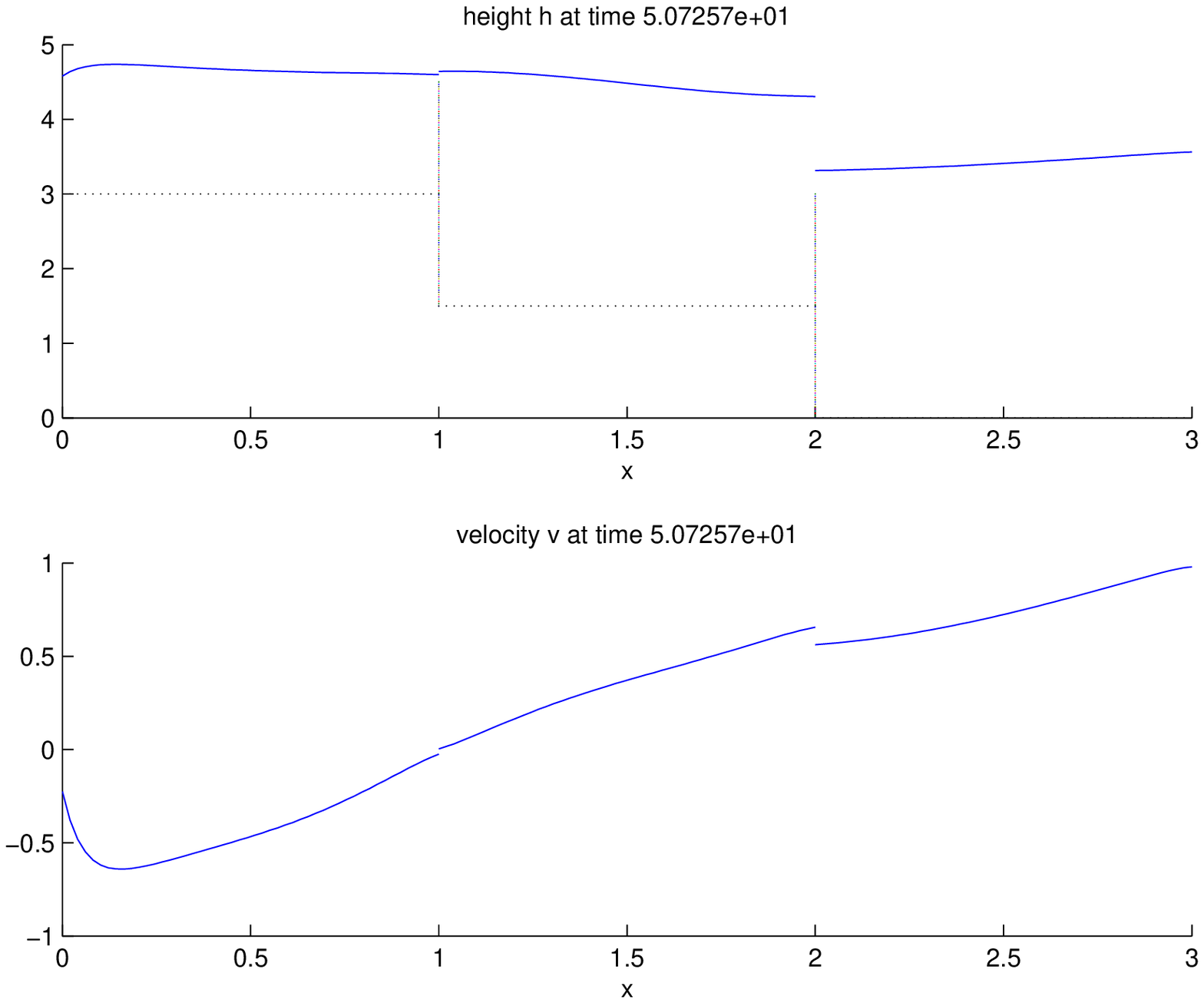,width=.3\textwidth}
\epsfig{file=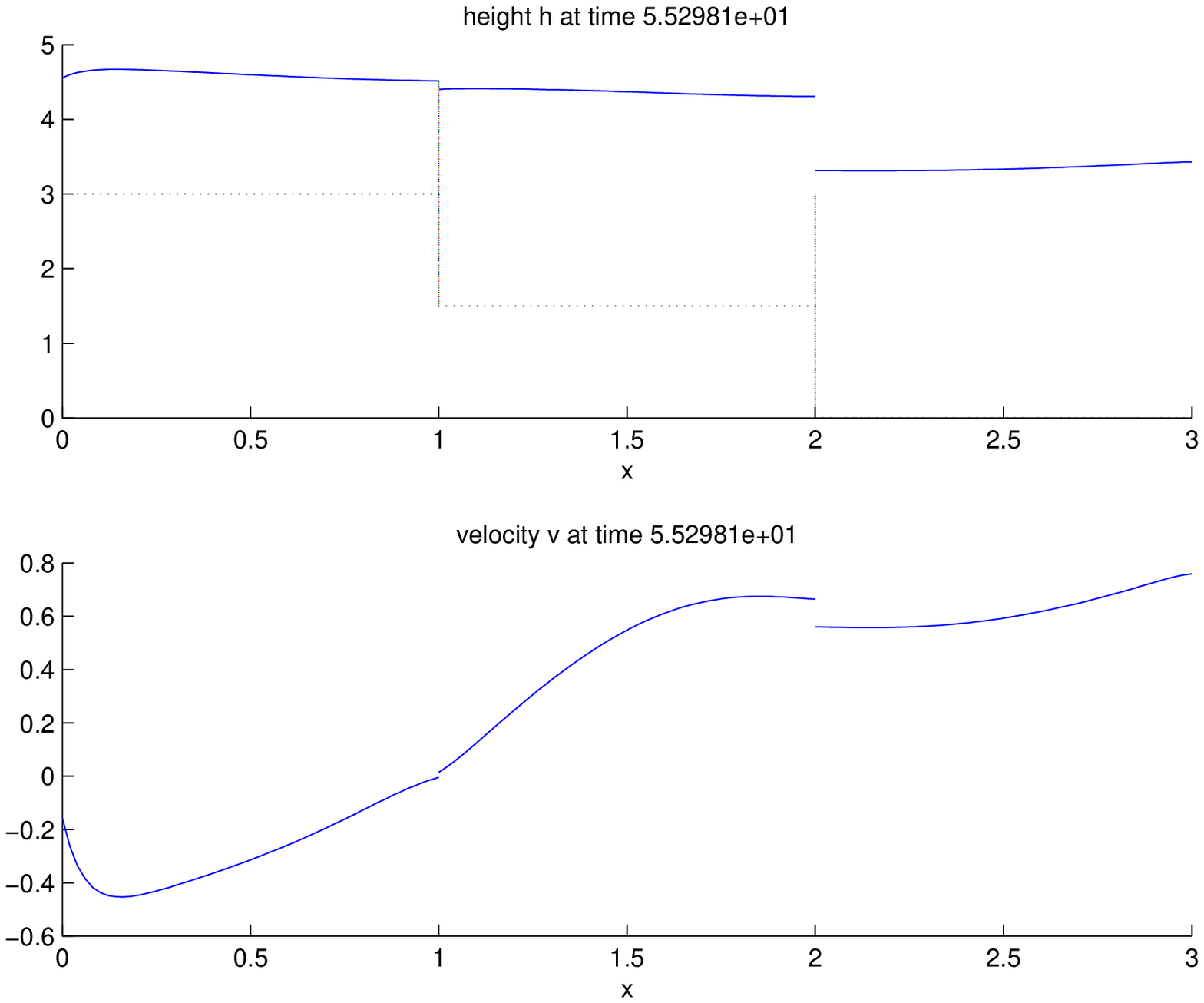,width=.3\textwidth} 
\caption{ Simulation results for a strong wave entering the pooled step. Initially, the water level on each step 
is above the  critical level.  The solution $(h,v)$ at different times is depicted.  The time increases from left to right and top to bottom.}
\label{fig06}
\end{figure}
\newpage

  
{\small{

    \bibliographystyle{abbrv}

    \bibliography{Dighe}

\begin{thebibliography}{10}

\bibitem{BandaHertyKlar2}
M.~K. Banda, M.~Herty, and A.~Klar.
\newblock Coupling conditions for gas networks governed by the isothecrmal
  {E}uler equations.
\newblock {\em Networks and Heterogeneous Media}, 1(2):275--294, 2006.

\bibitem{BandaHertyKlar1}
M.~K. Banda, M.~Herty, and A.~Klar.
\newblock Gas flow in pipeline networks.
\newblock {\em Networks and Heterogeneous Media}, 1(1):41--56, 2006.

\bibitem{Blaisdell1954}
F.~W. Blaisdell.
\newblock Equation for the free--falling nappe.
\newblock {\em Proceedings ASCE, no. 482}, 80, 1954.

\bibitem{bradley1957}
J.~N. Bradley and A.~J. Peterka.
\newblock The hydraulic design of sitlling basins.
\newblock {\em Journal of the Hydraulics Division}, 83:1401.1--1401.24, 1957.

\bibitem{BressanLectureNotes}
A.~Bressan.
\newblock {\em Hyperbolic systems of conservation laws}, volume~20 of {\em
  Oxford Lecture Series in Mathematics and its Applications}.
\newblock Oxford University Press, Oxford, 2000.
\newblock The one-dimensional Cauchy problem.

\bibitem{chamani1994}
M.~Chamani and N.~Rajaratnam.
\newblock Jet flow on stepped spillways.
\newblock {\em Journal of the Hydraulic Engineering}, 125:254--259, 1994.

\bibitem{chanson1994}
H.~Chanson.
\newblock Comparison of energy dissipation between nappe and skimming flow
  regimes on stepped chutes.
\newblock {\em Journal of the Hydraulic Research}, 32:213--218, 1994.

\bibitem{chanson1994b}
H.~Chanson.
\newblock Hydraulic design of stepped cascades, channels, weits and spillways.
\newblock {\em Pergamon Press, Oxford, England}, 1994.

\bibitem{chanson2002}
H.~Chanson.
\newblock The hydraulics of stepped chutes and spillways.
\newblock {\em Pergamon Publishers, Tokyo}, 2002.

\bibitem{ColomboGaravello3}
R.~M. Colombo and M.~Garavello.
\newblock On the {C}auchy problem for the $p$-system at a junction.
\newblock {\em SIAM J. on Math. Anal.}, 39(5):1456--1471, 2008.

\bibitem{ColomboGuerraB}
R.~M. Colombo and G.~Guerra.
\newblock On general balance laws with boundary.
\newblock {\em J. Differential Equations}, 248(5):1017--1043, 2010.

\bibitem{ColomboGuerraHertySchleper}
R.~M. Colombo, G.~Guerra, M.~Herty, and V.~Schleper.
\newblock Optimal control in networks of pipes and canals.
\newblock {\em SIAM J. Control Optim.}, 48(3):2032--2050, 2009.

\bibitem{MR2568708}
R.~M. Colombo and F.~Marcellini.
\newblock Smooth and discontinuous junctions in the {$p$}-system.
\newblock {\em J. Math. Anal. Appl.}, 361(2):440--456, 2010.

\bibitem{ColomboMauri}
R.~M. Colombo and C.~Mauri.
\newblock Euler system at a junction.
\newblock {\em Journal of Hyperbolic Differential Equations}, 5(3):547--568,
  2008.

\bibitem{CoronBastinEtAl}
J.~de~Halleux, C.~Prieur, J.-M. Coron, B.~d'Andr{\'e}a Novel, and G.~Bastin.
\newblock Boundary feedback control in networks of open channels.
\newblock {\em Automatica J. IFAC}, 39(8):1365--1376, 2003.

\bibitem{dressler1949}
R.~Dressler.
\newblock Mathematical solution to the problem of roll-waves in inclined open
  channels.
\newblock {\em Communication in Pure and Applied Mathematics}, 2:149--194,
  1949.

\bibitem{elkamash2005}
M.~El-Kmamash, M.~Loewen, and N.~Rajarantnam.
\newblock An experimental investigation of jet flow on a stepped chute.
\newblock {\em Journal of Hydraulic Research}, 43:31--43, 2005.

\bibitem{MR2529960}
G.~Guerra, F.~Marcellini, and V.~Schleper.
\newblock Balance laws with integrable unbounded sources.
\newblock {\em SIAM J. Math. Anal.}, 41(3):1164--1189, 2009.

\bibitem{Gugat}
M.~Gugat.
\newblock {Nodal control of conservation laws on networks. Sensitivity
  calculations for the control of systems of conservation laws with source
  terms on networks.}
\newblock {Cagnol, John (ed.) et al., Chapman \& Hall/CRC. Lecture Notes in
  Pure and Applied Mathematics 240, 201-215 (2005).}, 2005.

\bibitem{LeugeringSchmidt}
G.~Leugering and E.~J. P.~G. Schmidt.
\newblock On the modelling and stabilization of flows in networks of open
  canals.
\newblock {\em SIAM J. Control Optim.}, 41(1):164--180 (electronic), 2002.

\bibitem{ohtsu2004}
I.~Ohtsu, Y.~Yashuda, and M.~Takahashi.
\newblock Flow characteristics of skimming flows in stepped channels.
\newblock {\em Journal of Hydraulic Engineering}, 130:860--869.

\bibitem{rand1955}
W.~Rand.
\newblock Flow geometry at straight drop spillways.
\newblock {\em Proceedings ASCE, no. 791}, 81, 1955.

\bibitem{rouse1936}
H.~Rouse.
\newblock Discharge characteristics of the free overfall.
\newblock {\em Civil Engineering}, 6:257--260.

\bibitem{sorenson1985}
R.~M. Sorenson.
\newblock Energy dissipation down stepped spillways.
\newblock {\em Journal of Hydraulic Engineering}, 111:1461--1472.

\bibitem{OpenChHy}
T.~Sturm.
\newblock {\em Open Channel Hydraulics}.
\newblock McGraw-Hill, 2001.

\bibitem{IWW1}
J.~Thorwarth.
\newblock Hydraulisches verhalten von treppengerinnen mit eingetieften stufen -
  selbstinduzierte abflussinstationari\"aten und energiedissipation.
\newblock {\em PhD Thesis, RWTH Aachen University, Department of Civil
  Engineering}, 2008.

\end{thebibliography}

  }}
\end{document}